\newcommand{\R}{{\mathbb R}}
\DeclareMathOperator{\argmin}{argmin}
\newcommand{\cC}{{\mathcal C}}
\newcommand{\cE}{{\mathcal E}}
\newcommand{\cH}{{\mathcal H}}
\newcommand{\cO}{{\mathcal O}}
\newcommand{\demi}{\frac{1}{2}}
\newcommand{\ie}{{\it i.e.}\,\,}
\newlength{\textlarg} 
\newcommand{\eqdef}{:=}
\newcommand{\norm}[1]{\left\|{#1}\right\|}
\begin{document}

\title{Fast convex optimization via inertial dynamics combining viscous and Hessian-driven damping with time rescaling}

\titlerunning{Inertial dynamics with Hessian damping and time rescaling}

\author{Hedy ATTOUCH \and A\"icha BALHAG   \and Zaki CHBANI   \and Hassan RIAHI}

\institute{
Hedy ATTOUCH  \at IMAG, Univ. Montpellier, CNRS, Montpellier, France\\
hedy.attouch@umontpellier.fr,\\  Supported by COST Action: CA16228
\and
A\"icha BALHAG   \and Zaki CHBANI   \and Hassan RIAHI\\
 Cadi Ayyad University \\ S\'emlalia Faculty of Sciences 
 40000 Marrakech, Morroco\\
 aichabalhag@gmail.com \and   chbaniz@uca.ac.ma  \and h-riahi@uca.ac.ma 
}
\maketitle


\begin{abstract}
  In a  Hilbert setting, we develop fast methods for convex unconstrained optimization.  We rely on the asymptotic behavior of an inertial system combining geometric damping with temporal scaling.
The convex function to  minimize  enters the dynamic via its gradient. The dynamic includes three coefficients varying with time, one is a viscous damping coefficient, the second is attached to the Hessian-driven damping, the third is a time scaling coefficient. We study the  convergence rate of the values  under general conditions involving the  damping  and  the time scale coefficients.
 The obtained results are based on a new Lyapunov analysis and they encompass known results on the subject. We pay particular attention to the case of an asymptotically vanishing viscous damping, which is directly related to the accelerated gradient method of Nesterov. The Hessian-driven damping significantly reduces the oscillatory aspects. As a main result, we obtain an exponential rate of convergence of values without assuming the strong convexity of the objective function.
The temporal discretization of these dynamics opens the gate to a large class of inertial optimization algorithms.
\end{abstract}

\keywords{damped inertial gradient dynamics; fast convex optimization; Hessian-driven damping;  Nesterov accelerated gradient method;  time rescaling}

\subclass{37N40, 46N10, 49M30, 65K05, 65K10, 90B50, 90C25.}



\section{Introduction}
Throughout the paper, $\mathcal{H}$ is a real Hilbert space with inner product $\langle \cdot,\cdot\rangle$ and  induced norm $\Vert \cdot \Vert,$
and $f: \mathcal{H} \rightarrow  \R$ is a  convex and differentiable  function. We aim at developping fast numerical methods for solving
the  optimization problem 
\begin{equation*}
({\mathcal P})\hspace{10mm}
 \min_{x\in \mathcal{H}}f(x) .
\end{equation*}
 We denote by  $\hbox{argmin}_{\mathcal{H}}f$ the set of minimizers of the optimization problem $({\mathcal P})$, which is assumed to be non-empty.
Our work is part of the active research stream that studies the close link between continuous dissipative dynamical systems and optimization algorithms. In general, the implicit temporal discretization of continuous gradient-based dynamics provides proximal algorithms that benefit from similar asymptotic convergence properties, see \cite{PS} for a systematic study in the case of first-order evolution systems, and 
\cite{att8,att9,ACFR,att6,ACR-Pafa-2019,Bot1,Bot2,Bot3} for some recent results concerning second-order evolution equations.
The main object of our study is the  second-order in time differential  equation
\begin{equation*}
{\rm (IGS)}_{\gamma,\beta,b} \hspace{10mm}
\ddot{x}(t)+\gamma(t)\dot{x}(t)+ \beta(t)\nabla^2 f(x(t))\dot{x}(t)+b(t)\nabla f(x(t))=0,
 \end{equation*}
where the coefficients  $\gamma, \beta: [t_0, +\infty[ \rightarrow \R_{+}$ take account of the viscous  and  Hessian-driven damping, respectively, and  $b:  \R_{+} \to \R_{+}$ is a time scale parameter. We take for granted the existence and uniqueness of the solution of the corresponding Cauchy problem with initial conditions $x(t_0)=x_0\in \cH$, $\dot{x}(t_0)=v_0\in \cH$. Assuming that $\nabla f$ is Lipschitz continuous on the bounded sets, and that the coefficients are continuously differentiable, the local existence  follows from the nonautonomous version of the Cauchy-Lipschitz theorem, see \cite[Prop. 6.2.1]{haraux}. The global existence then follows from the energy estimates that will be established in the next section. 
Each of these damping and rescaling terms properly tuned, improves the rate of convergence of the  associated dynamics and algorithms. An original aspect of our work is to combine them in the same dynamic. Let us recall some classical facts.

\subsection{Damped inertial dynamics and optimization} 
The continuous-time perspective   gives a mechanical intuition of the behavior of the trajectories, and a valuable tool to develop a Lyapunov analysis.
 A first important work in this perspective is the heavy ball with friction method of B. Polyak \cite{polyak} 
$$
{\rm (HBF)} \quad \ddot{x}(t)+\gamma\dot{x}(t)+\nabla f(x(t))=0.
$$
It is a simplified model for  a heavy ball (whose  mass has been normalized to one) sliding on the graph of the function $f$ to be minimized, and which asymptotically stops under the action of viscous friction, see \cite{AGR} for further details. In this model, the  viscous friction parameter $\gamma$ is a fixed positive parameter. 
Due to too much friction (at least asymptotically) involved in this process, replacing  the fixed viscous coefficient with a vanishing viscous coefficient (\ie which tends to zero as $t \to +\infty$) gives  Nesterov's famous accelerated gradient method \cite{nestro1} \cite{nestro2}.
The  other two basic ingredients that we will use, namely  time rescaling, and  Hessian-driven damping have a natural interpretation (cinematic and geometric, respectively) in this context. We will come back to these points later.
Precisely, we seek to develop fast first-order methods  based on the temporal discretization  of damped inertial dynamics.
By fast we mean that, for a  general convex function $f$, and for each trajectory of the system, the convergence rate of the values 
$f(x(t)) - \inf_{\cH} f$ which is obtained is optimal (\ie is achieved of nearly achieved in the worst case). 
The importance of simple first-order methods, and in particular gradient-based and proximal algorithms, comes from the applicability of these algorithms  to a wide
range of large-scale  problems arising from machine learning and/or engineering.

\subsubsection{The viscous damping parameter $\gamma (t)$.}\label{sec:vanishing_damping}

A significant number of recent studies have focused on the  case $\gamma(t)=\frac{\alpha}{t}$,  $\beta=0$ (without Hessian-driven damping), and   $b=1$ (without time rescaling), that is 
\begin{equation*}
{\rm (AVD)}_{\alpha} \hspace{10mm}
\ddot{x}(t)+\frac{\alpha}{t}\dot{x}(t)+\nabla f(x(t))=0.
 \end{equation*}
 This dynamic involves an   Asymptotically Vanishing Damping coefficient (hence the terminology), a key property to obtain fast convergence for a general convex function $f$. In \cite{boyd},  
 Su, Boyd and Cand\`es  showed that for $\alpha=3$ the above system can be seen as a continuous
version of the accelerated gradient method of Nesterov  \cite{nestro1,nestro2} with 
$ f(x(t))-\min_{\mathcal{H}}f = \mathcal O( \frac{1}{t^{2}})$ as $t \to +\infty$. 
The importance of the  parameter $\alpha$ was put to the fore by
Attouch, Chbani, Peypouquet and Redont \cite{redon} and May \cite{may}. They showed that, for $\alpha>3$, one can pass from capital $\mathcal O$ estimates to small $o$. Moreover, when $\alpha >3$, each trajectory converges weakly, and its limit belongs to $\argmin f$\footnote{Recall that for $\alpha =3$ the convergence of the trajectories is an open question}.
Recent research considered the case of a general damping coefficient $\gamma(\cdot)$ (see \cite{cabot1,att2}), thus providing a complete picture of the convergence rates for ${\rm (AVD)}_{\alpha}$:  $ f(x(t))-\min_{\mathcal{H}}f = \mathcal O( 1/t^{2})$ when  $\alpha \geq 3$, and  $f(x(t))-\min_{\mathcal{H}}f = \mathcal O\left( 1/t^{\frac{2\alpha}{3}}\right)$ when  $\alpha\leq 3$, see  \cite{att2,att1} and Apidopoulos, Aujol and Dossal \cite{AAD}.

\subsubsection{The Hessian-driven damping parameter $\beta(t)$.}
 The inertial system 
\begin{equation*}
{\rm \mbox{(DIN)}}_{\gamma,\beta} \qquad \ddot{x}(t) + \gamma \dot{x}(t) + \beta \nabla^2 f (x(t)) \dot{x}(t)  + \nabla f (x(t)) = 0,
\end{equation*}
was  introduced by Alvarez, Attouch, Bolte, and Redont in \cite{AABR}. In line with (HBF), it contains a \textit{fixed} positive friction coefficient $\gamma$. As a main property, the introduction of the Hessian-driven damping makes it possible to neutralize the transversal oscillations likely to occur with (HBF), as observed in \cite{AABR}. The need to take a geometric damping adapted to $f$ had already been observed by Alvarez \cite{Alvarez} who considered the inertial system
\[
\ddot{x}(t) + \mathcal D \dot{x}(t) + \nabla f (x(t)) = 0 ,
\] 
where $\mathcal D : \cH \to \cH$ is a linear positive definite anisotropic operator. But still this damping operator is fixed. For a general convex function, the Hessian-driven damping in $\mbox{\rm (DIN)}_{\gamma,\beta}$ performs a similar operation in a closed-loop adaptive way. (DIN) stands shortly for Dynamical Inertial Newton, and  refers to the link with the Levenberg-Marquardt regularization of the continuous Newton method.
Recent studies have been devoted to the study of the inertial dynamic 
\[
\ddot{x}(t) + \frac{\alpha}{t} \dot{x}(t)+  \beta \nabla^2 f (x(t)) \dot{x}(t)  + \nabla f (x(t)) = 0 ,
\]  
which combines asymptotic vanishing damping  with Hessian-driven damping \cite{APR}.

 \subsubsection{The time rescaling parameter b(t).}
 In the context of non-autonomous dissipative dynamic systems, reparameterization in time is a simple and universal means to accelerate the convergence of trajectories.
 This is where the  coefficient $b(t)$ comes in as a factor of $\nabla f(x(t))$.
In \cite{att6} \cite{ACR-Pafa-2019},  in the case of  general coefficients $\gamma(\cdot)$ and $b(\cdot)$  without the Hessian damping, the authors  made in-depth study. In the case $\gamma(t)=\frac{\alpha}{t}$,  they proved    that  under appropriate conditions on $\alpha$ and $b(\cdot)$,  $f(x(t))-\min_{\mathcal{H}}f = \mathcal O( \frac{1}{t^{2}b(t)})$.
Hence a clear improvement of the convergence rate by taking $b(t) \to +\infty$ as $t\to+\infty$.

\subsection{From damped inertial dynamics  to proximal-gradient inertial algorithms}

Let's review  some classical facts concerning the close link between continuous dissipative inertial dynamic systems and the corresponding algorithms obtained by temporal discretization.
Let us insist on the fact that, when the temporal scaling $b(t) \to +\infty$ as $t\to+\infty$, the transposition of the results to the discrete case naturally leads to consider an implicit temporal discretization, \ie inertial proximal  algorithms.
The reason is that, since  $b(t)$ is in front of the gradient, the application of the gradient descent lemma would require  taking a step size that tends to zero.
On the other hand, the corresponding proximal algorithms  involve a proximal coefficient which tends to infinity (large step proximal algorithms).

\subsubsection{The case without the Hessian-driven damping}
The implicit discretization of ${\rm (IGS)}_{\gamma,0,b}$  gives the  Inertial Proximal algorithm
\begin{equation*}
{\rm (IP)}_{\alpha_{k},\lambda_{k}}\hspace{10mm}
\left\{\begin{array}{l} y_{k}=x_{k}+\alpha_{k}(x_{k}-x_{k-1}) \vspace{2mm}\\ 
 x _{k+1} = \hbox{prox}_{ \lambda_{k}f} ( y_{k} ) \end{array}\right.
\end{equation*}
where $\alpha_{k}$ is non-negative and $\lambda_{k}$ is positive. Recall that for any $\lambda >0$, the proximity operator $\hbox{prox}_{\lambda f}: \cH \to \cH$  is defined by the following formula: for every $x\in \cH$
$$
\hbox{prox}_{\lambda f} (x):=\hbox{argmin}_{\xi\in \cH} \left\lbrace f( \xi ) + \frac1{2\lambda} \| x - \xi \|^2 \right\rbrace.$$ 
Equivalently, $\hbox{prox}_{\lambda f}$ is the resolvent of index $\lambda$ of the maximally monotone operator $\partial f$. 
When passing to the implicit discrete case,
we can take $f: \mathcal{H} \rightarrow  \R \cup \{+\infty\} $ a convex lower semicontinuous and proper function.
Let us list some of the main results concerning the convergence properties of the algorithm ${\rm (IP)}_{\alpha_{k},\lambda_{k}}$:

\smallskip

$\bullet$ 1.  Case  $\lambda_{k}  \equiv \lambda >0$  and $\alpha_k=1-\frac{\alpha}{k}$. When $\alpha=3$,  the  ${\rm (IP)}_{1-3/k,\lambda}$ algorithm has a similar structure to the original  Nesterov accelerated gradient algorithm\cite{nestro1}, just replace the gradient step with a proximal step. Passing from the gradient to the proximal step was carried out by G\"uler \cite{Guler1,Guler2}, then by Beck and Teboulle \cite{BeckTeboulle} for structured optimization.  
A decisive step was taken by   Attouch and Peypouquet in \cite{att7}  proving  that,   when $\alpha>3$, $f(x_{k})-\min_{\mathcal{H}}f = o\left(\frac{1}{k^{2}}\right)$. The subcritical case $\alpha< 3$ was  examined by Apidopoulos, Aujol, and Dossal \cite{AAD} and Attouch, Chbani, and Riahi \cite{att1} with the rate of convergence rate of values $f(x_{k})-\min_{\mathcal{H}}f = \mathcal O\left(\frac{1}{k^{\frac{2\alpha}{3}}}\right)$. 

\smallskip

$\bullet$ 2.  For a general $\alpha_k$,  the convergence properties of ${\rm (IP)}_{\alpha_{k},\lambda}$ were analyzed by  Attouch and Cabot \cite{att8},  then by Attouch, Cabot, Chbani, and Riahi \cite{att9},  in  the presence of perturbations. 
The convergence rates are then expressed    using  the sequence  $ (t_k )$ which is  linked to $(\alpha_k)$  by the formula 
$t_k:= 1+\sum_{i=k}^{+\infty}\prod_{j=k}^i\alpha_{j} $.
Under growth conditions on $t_k$, it is proved  that  $f(x_{k})-\min_{\mathcal{H}}f = \mathcal O(\frac{1}{t_{k}^{2}})$. This last results covers the special case $\alpha_k=1-\frac{\alpha}{k}$  when $\alpha \geq 3$.

\smallskip

$\bullet$ 3. For a general $\lambda_k$,  Attouch, Chbani, and Riahi first considered in \cite{att6} the case   $\alpha_k=1-\frac{\alpha}{k}$. They proved  that under a growth condition on $\lambda_k $, we have the estimate  $f(x_{k})-\min_{\mathcal{H}}f = \mathcal O(\frac{1}{k^{2}\lambda_{k}})$.
 This result is an improvement of the one discussed previously in \cite{att7}, because when $ \lambda_k = k^{\delta} $ with $ 0 <\delta <\alpha-3 $, we pass from $ \mathcal O (\frac {1} {k^{2}}) $ to $ \mathcal O (\frac {1} {k^{2+ \delta}}) $.
   Recently, in \cite{att11} the authors   analyzed   the algorithm ${\rm (IP)}_{\alpha_{k},\lambda_{k}}$ for  general $\alpha_k$ and $\lambda_k$. By including the expression  of $t_k$  previously used in \cite{att8,att9}, they proved that $f(x_{k})-\min_{\mathcal{H}}f = \mathcal O\left(1/ t_{k}^{2} \lambda_{k-1}\right)$ under certain conditions on $\lambda_k$ and $\alpha_{k}$. They obtained $f(x_{k})-\min_{\mathcal{H}}f = 
   o\left(1/t_{k}^{2}\lambda_{k}\right)$, which gives a global view of of the convergence rate with small $o$, encompassing 
\cite{att8,att11}.
   
\subsubsection{The case with the Hessian-driven damping}

Recent studies have been devoted to  the inertial dynamic 
\[
\ddot{x}(t) + \frac{\alpha}{t} \dot{x}(t)+  \beta \nabla^2 f (x(t)) \dot{x}(t)  + \nabla f (x(t)) = 0 ,
\]  
which combines asymptotic vanishing viscous damping  with Hessian-driven damping.
The corresponding algorithms involve a correcting term in the Nesterov accelerated gradient method which reduces the oscillatory aspects, see  Attouch-Peypouquet-Redont \cite{APR}, Attouch-Chbani-Fadili-Riahi \cite{ACFR},
 Shi-Du-Jordan-Su   \cite{SDJS}. The case of  monotone inclusions has been  considered by   Attouch and L\'aszl\'o \cite{AL}.
 
 \smallskip
 
 \subsection{Contents}
The paper is organized as follows. In section \ref{sec:general},
we  develop a new Lyapunov analysis for the continuous dynamic
${\rm (IGS)}_{\gamma,\beta,b}$. In Theorem \ref{th1aa}, we  provide a system of conditions on the damping parameters
$ \gamma (\cdot) $ and $ \beta (\cdot) $, and on the temporal scaling parameter $ b (\cdot) $ giving fast convergence of the values.
Then, in sections \ref{Gamma} and \ref{p_gamma}, 
we present two different types of growth conditions for the damping and temporal scaling parameters, respectively based on the  functions 
 ${\rm \Gamma}_{\gamma}$ and $p_{\gamma}$, and which satisfy the conditions of  Theorem \ref{th1aa}.
In doing so, we encompass most existing results and provide new results, including  linear convergence rates without assuming strong convexity.
This will also allow us to explain the choice of certain coefficients in the associated algorithms, questions which have remained mysterious and only justified by the  simplification of often complicated calculations.
In section \ref{illustrations}, we  specialize our results to certain model situations and give numerical illustrations.
Finally, we conclude the paper by highlighting its original aspects.

 \section{Convergence rate of the values. General abstract result}
 \label{sec:general}
  We will establish a general result concerning the  convergence rate of the values verified by the solution trajectories $x(\cdot)$   of the second-order evolution equation 
 \begin{equation*}
{\rm (IGS)_{\gamma,\beta,\textit{b}}}\hspace{10mm}
\ddot{x}(t)+\gamma(t)\dot{x}(t)+\beta(t)\nabla^2 f(x(t))\dot{x}(t) +b(t)\nabla f(x(t))=0.
 \end{equation*}
The variable parameters  $\gamma(\cdot)$, $\beta(\cdot)$ and $b(\cdot) $ take into account  the damping, and temporal rescaling effects. They are assumed to be continuously differentiable.
To analyze the asymptotic behavior of the solutions trajectories of the evolution system ${\rm (IGS)_{\gamma,\beta,\textit{b}}}$, we will use Lyapunov's analysis. It is a classic and powerful tool which consists in building an associated energy-like function which decreases along the trajectories.
The determination of such a Lyapunov function is in general a delicate problem. Based on previous works, we know the global structure of such a Lyapunov function. It is a weighted sum of the potential, kinetic and anchor functions.  We will introduce coefficients in this function that are a priori unknown, and which  will be identified during the calculation to verify the property of decay.
Our approach takes advantage of the technics recently developed in \cite{cabot1},  \cite{APR}, \cite{ACR-Pafa-2019}.
\subsection{The general case}
Let $x(\cdot)$ be a solution trajectory of  ${\rm (IGS)_{\gamma,\beta,\textit{b}}}$. 
Given $ z \in \hbox{argmin}_{\cH} f $, we  introduce the  Lyapunov function $t \mapsto \mathcal{E}(t)$ defined by 
\begin{equation}\label{equ1}
\mathcal{E}(t):= c(t)^{2}b(t)\left(f(x(t))-f(z)\right)+\frac{\theta(t)\sigma (t)^{2}}{2}\left\| v(t)\right\|^2 + \frac{\xi (t)}{2} \Vert x (t) -z\Vert^{2} ,
\end{equation}
where $v(t):=x(t) -z+\frac1{ \sigma(t)}\left(\dot{x}(t) +\beta(t) \nabla f(x(t))\right).$

\medskip

\noindent The  four variable coefficients $c(t), \theta(t), \sigma (t), \xi(t)$ will be adjusted during the calculation.
According to the classical derivation chain rule, we obtain 
\begin{eqnarray*}
\frac{d}{dt} \mathcal{E}(t) & = & \frac{d}{dt}\left( c^{2}(t)b(t)\right) \left(f(x(t))-f(z)\right)+c(t)^{2}b(t)\langle\nabla f(x(t)),\dot{x}(t)\rangle \\ &+& \frac12\frac{d}{dt}(\theta(t) \sigma^2 (t))\|v (t)\|^2 
 +\theta(t) \sigma^2 (t)\langle\dot{v}(t),v (t)\rangle  \\
 &+& \demi\dot\xi (t)\|x (t)-z\|^2 +\xi (t)\langle\dot{x} (t) , x(t)-z\rangle.
\end{eqnarray*}
From now, without ambiguity, to shorten formulas, we omit the variable $t$.\\
According to the definition of $v$, and the  equation ${\rm (IGS)_{\gamma,\beta,\textit{b}}}$, we have 
\begin{eqnarray*}
\dot{v } &=& \dot{x} -\frac{\dot{\sigma}}{ \sigma^2} \left(\dot{x} + \beta \nabla f(x)\right) + 
\frac1{ \sigma}\frac{d}{dt}\left(\dot{x} + \beta \nabla f(x)\right) \\ 
	&=& \dot{x} -\frac{\dot{\sigma}}{ \sigma^2}\left(\dot{x} + \beta \nabla f(x)\right) + 
\frac1{ \sigma}\left(\ddot{x} + \beta \nabla^2 f(x)\dot x + \dot \beta \nabla f(x) \right) \\ 
	&=& \dot{x} -\frac{\dot{\sigma}}{ \sigma^2}\left(\dot{x} + \beta \nabla f(x)\right) + 
\frac1{ \sigma}\left(-\gamma\dot{x} - b\nabla f(x)  + \dot \beta \nabla f(x) \right) \\ 
	&=& \left( 1 -\frac{\dot{\sigma}}{ \sigma^2} -  \frac{\gamma}{ \sigma}\right)\dot{x} + 
	 \left(\frac{\dot \beta}{ \sigma}  -\frac{\dot{\sigma}\beta}{ \sigma^2} - \frac{b}{ \sigma} \right)\nabla f(x).
\end{eqnarray*}
Therefore, 
\begin{eqnarray*}
\langle\dot{v},v\rangle &=&  \left\langle \left( 1 -\frac{\dot{\sigma}}{ \sigma^2} -  \frac{\gamma}{ \sigma}\right)\dot{x} + 
	 \left(\frac{\dot \beta}{ \sigma}  -\frac{\dot{\sigma}\beta}{ \sigma^2} - \frac{b}{ \sigma} \right)\nabla f(x)\;,\; x -z+\frac1{ \sigma}\left(\dot{x} + \beta \nabla f(x)\right)\right\rangle \\ 
	 &=& \left( 1 -\frac{\dot{\sigma}}{ \sigma^2} -  \frac{\gamma}{ \sigma}\right)  \left\langle \dot{x}\;,\; x -z\right\rangle +  \left( 1 -\frac{\dot{\sigma}}{ \sigma^2} -  \frac{\gamma}{ \sigma}\right)\frac1{ \sigma}\| \dot{x}\|^2\\ 
	 &+&  \left(\left( 1 -\frac{\dot{\sigma}}{ \sigma^2} -  \frac{\gamma}{ \sigma}\right) \frac{\beta}{ \sigma} +  \left(\frac{\dot \beta}{ \sigma}  -\frac{\dot{\sigma}\beta}{ \sigma^2} - \frac{b}{ \sigma} \right)\frac{1}{ \sigma}\right) \left\langle  \nabla f(x)\;,\;  \dot{x}\right\rangle \\ 
	 &+&  \left(\frac{\dot \beta}{ \sigma}  -\frac{\dot{\sigma}\beta}{ \sigma^2} - \frac{b}{ \sigma} \right)\left\langle  \nabla f(x)\;,\;  x-z\right\rangle +  \left(\frac{\dot \beta}{ \sigma}  -\frac{\dot{\sigma}\beta}{ \sigma^2} - \frac{b}{ \sigma} \right)\frac{\beta}{ \sigma}\|\nabla f(x)\|^2.
\end{eqnarray*}
According to the definition of $v(t)$, after developing 
$ \|v(t)\|^2$, we get
\begin{eqnarray*}
 \|v\|^2 &=&  \|x -z\|^2+\frac1{ \sigma^2}\left(\|\dot{x}\|^2 + \beta^2 \|\nabla f(x)\|^2\right) +\frac2{\sigma} \left\langle \dot{x}\;,\; x -z\right\rangle\\ 
 && + \frac{2\beta}{\sigma}\left\langle  \nabla f(x)\;,\;  x-z\right\rangle +\frac{2\beta}{\sigma^2}\left\langle  \nabla f(x)\;,\;  \dot{x}\right\rangle. 
 \end{eqnarray*}
Collecting the above results, we obtain
\begin{eqnarray*}
\frac{d}{dt} \mathcal{E}(t) & = & \frac{d}{dt}\left( c^{2}b\right) \left(f(x)-f(z)\right)+c^{2}b\langle\nabla f(x),\dot{x}\rangle  + \demi\dot\xi \|x -z\|^2 +\xi \langle\dot{x} , x-z\rangle \\
&&+
\frac12\frac{d}{dt}(\theta \sigma^2 ) \Big( \|x -z\|^2+\frac1{ \sigma^2}\left(\|\dot{x}\|^2 + \beta^2 \|\nabla f(x)\|^2\right) +\frac2{\sigma} \left\langle \dot{x}\;,\; x -z\right\rangle \Big)\\
 && +\frac12\frac{d}{dt}(\theta \sigma^2 ) \Big( \frac{2\beta}{\sigma}\left\langle  \nabla f(x)\;,\;  x-z\right\rangle +\frac{2\beta}{\sigma^2}\left\langle  \nabla f(x)\;,\;  \dot{x}\right\rangle \Big)\\
 && +\theta \sigma^{2}\Big( \left( 1 -\frac{\dot{\sigma}}{ \sigma^2} -  \frac{\gamma}{ \sigma}\right)  \left\langle \dot{x}\;,\; x -z\right\rangle +  \left( 1 -\frac{\dot{\sigma}}{ \sigma^2} -  \frac{\gamma}{ \sigma}\right)\frac1{ \sigma}\| \dot{x}\|^2 \Big) \\
 && +\theta \sigma^{2} \left(\left( 1 -\frac{\dot{\sigma}}{ \sigma^2} -  \frac{\gamma}{ \sigma}\right) \frac{\beta}{ \sigma} +  \left(\frac{\dot \beta}{ \sigma}  -\frac{\dot{\sigma}\beta}{ \sigma^2} - \frac{b}{ \sigma} \right)\frac{1}{ \sigma}\right) \left\langle  \nabla f(x)\;,\;  \dot{x}\right\rangle \\
 && +\theta \sigma^{2}\Big( \left(\frac{\dot \beta}{ \sigma}  -\frac{\dot{\sigma}\beta}{ \sigma^2} - \frac{b}{ \sigma} \right)\left\langle  \nabla f(x)\;,\;  x-z\right\rangle +  \left(\frac{\dot \beta}{ \sigma}  -\frac{\dot{\sigma}\beta}{ \sigma^2} - \frac{b}{ \sigma} \right)\frac{\beta}{ \sigma}\|\nabla f(x)\|^2 \Big).
\end{eqnarray*}
In the second member of the above formula, let us examine the terms
that contain 
$\left\langle  \nabla f(x)\;,\;  x-z\right\rangle$. By grouping these terms, we obtain the following expression
$$
\Big(\frac{\beta}{\sigma}\frac{d}{dt}(\theta \sigma^2 ) +  \theta \sigma^{2} \left(\frac{\dot \beta}{ \sigma}  -\frac{\dot{\sigma}\beta}{ \sigma^2} - \frac{b}{ \sigma} \right)    \Big) \left\langle  \nabla f(x)\;,\;  x-z\right\rangle.
$$
To majorize it,  we use the convex subgradient inequality 
$
 \langle \nabla f(x),x-z \rangle \geq  f(x)-f(z),
$
and we make a first hypothesis  
$
\frac{\beta}{\sigma}\frac{d}{dt}(\theta \sigma^2 ) +  \theta \sigma^{2} \left(\frac{\dot \beta}{ \sigma}  -\frac{\dot{\sigma}\beta}{ \sigma^2} - \frac{b}{ \sigma} \right)  \leq 0.
$
Therefore, 
\begin{eqnarray}
\frac{d}{dt} \mathcal{E}(t)   
	&& \leq   \left[ \frac{d}{dt}\left( c^{2}b\right) +\frac{\beta}{\sigma}\frac{d}{dt}(\theta \sigma^2 ) +  \theta \sigma^{2} \left(\frac{\dot \beta}{ \sigma}  -\frac{\dot{\sigma}\beta}{ \sigma^2} - \frac{b}{ \sigma} \right)  \right] \left(f(x)-f(z)\right) \nonumber \\ 
	 &&+  \left[c^{2}b + \frac{\beta}{\sigma^2}\frac{d}{dt}(\theta \sigma^2 )  + \theta \sigma\left( \left( 1 -\frac{\dot{\sigma}}{ \sigma^2} -  \frac{\gamma}{ \sigma}\right) \beta +  \left(\frac{\dot \beta}{ \sigma}  -\frac{\dot{\sigma}\beta}{ \sigma^2} - \frac{b}{ \sigma} \right)\right)\right] \left\langle  \nabla f(x)\;,\;  \dot{x}\right\rangle \nonumber\\
	 &&+   \left[  \frac{1}{\sigma}\frac{d}{dt}(\theta \sigma^2 ) +\theta  \sigma^{2}\left( 1 -\frac{\dot{\sigma}}{ \sigma^2} -  \frac{\gamma}{ \sigma}\right) + \xi\right]   \left\langle \dot{x}\;,\; x -z\right\rangle  \nonumber
	 \\ 
	&&+ \demi\left[ \frac{d}{dt}(\theta \sigma^2 )  +   \dot\xi \right]\|x-z\|^2+\left[ \frac{1}{2\sigma^2}\frac{d}{dt}(\theta \sigma^2 )   + \theta \sigma \left( 1 -\frac{\dot{\sigma}}{ \sigma^2} -  \frac{\gamma}{ \sigma}\right)  \right]\|\dot{x}\|^{2} \nonumber \\  
	&&+ \left[ \frac{\beta^2}{2\sigma^2}\frac{d}{dt}(\theta \sigma^2 ) +  \theta \sigma \beta \left(\frac{\dot \beta}{ \sigma}  -\frac{\dot{\sigma}\beta}{ \sigma^2} - \frac{b}{ \sigma}\right) \right]\|\nabla f(x)\|^2. \label{equ2}
\end{eqnarray}
To get $\frac{d}{dt} \mathcal{E}(t)\leq 0$, we are led to make the following assumptions:
\begin{eqnarray*}
(i) &\hspace{5mm}& \frac{\beta}{\sigma}\frac{d}{dt}(\theta \sigma^2 ) +  \theta \sigma^{2} \left(\frac{\dot \beta}{ \sigma}  -\frac{\dot{\sigma}\beta}{ \sigma^2} - \frac{b}{ \sigma} \right)  \leq 0\\
(ii) &\hspace{5mm}& \frac{d}{dt}\left( c^{2}b\right) + \frac{\beta}{\sigma}\frac{d}{dt}(\theta \sigma^2 ) +  \theta \sigma^{2} \left(\frac{\dot \beta}{ \sigma}  -\frac{\dot{\sigma}\beta}{ \sigma^2} - \frac{b}{ \sigma} \right)  \leq 0,\\ 
(iii) && c^{2}b + \frac{\beta}{\sigma^2}\frac{d}{dt}(\theta \sigma^2 )  + \theta \sigma\left( \left( 1 -\frac{\dot{\sigma}}{ \sigma^2} -  \frac{\gamma}{ \sigma}\right) \beta +  \left(\frac{\dot \beta}{ \sigma}  -\frac{\dot{\sigma}\beta}{ \sigma^2} - \frac{b}{ \sigma} \right)\right) = 0, \\
(iv) &&    \frac{1}{\sigma}\frac{d}{dt}(\theta \sigma^2 )  +\theta  \sigma^{2}\left( 1 -\frac{\dot{\sigma}}{ \sigma^2} -  \frac{\gamma}{ \sigma}\right) + \xi =0,\\
(v) &&  \frac{d}{dt}(\theta \sigma^2 )  +   \dot\xi\leq 0,\\
(vi) &&  \frac{1}{2\sigma^2}\frac{d}{dt}(\theta \sigma^2 )   + \theta \sigma \left( 1 -\frac{\dot{\sigma}}{ \sigma^2} -  \frac{\gamma}{ \sigma}\right)    \leq 0,\\  
(vii) &&  \frac{\beta^2}{2\sigma^2}\frac{d}{dt}(\theta \sigma^2 ) +  \theta \sigma \beta \left(\frac{\dot \beta}{ \sigma}  -\frac{\dot{\sigma}\beta}{ \sigma^2} - \frac{b}{ \sigma}\right) \leq 0.
\end{eqnarray*}
After simplification, we get the following equivalent system of conditions:
\begin{eqnarray*}
\boxed{
\begin{array}{rcl}
&& {\rm \mbox{A: Lyapunov system of inequalities involving }}\; \; c(t), \theta(t), \sigma (t), \xi(t).  \hspace{1cm}
\\
\hline
 \smallskip\\
 &&(i) \hspace{5mm}  \frac{d}{dt}( \beta\theta\sigma)     - \theta b \sigma \leq 0 \vspace{3mm} \\
&&(ii) \hspace{5mm}\frac{d}{dt}\left( c^{2}b + \beta\theta\sigma \right)     - \theta b \sigma \leq 0,\vspace{3mm} \\  
&&(iii) \hspace{5mm}  b(c^{2}-\theta )  + \beta \theta  (\sigma-\gamma) + \frac{d}{dt}( \beta\theta) = 0, \vspace{3mm} \\
&&(iv) \hspace{5mm}    \frac{d}{dt}(\theta\sigma) + \theta\sigma \left( \sigma -  \gamma \right) + \xi =0,\vspace{3mm} \\
&&(v)   \hspace{5mm}  \frac{d}{dt}(\theta\sigma^2 +\xi) \leq 0, \vspace{3mm}\\
&&(vi)  \hspace{5mm}  \dot\theta+ 2(\sigma -\gamma)\theta   \leq 0, 
\vspace{3mm}\\  
&&(vii)\hspace{5mm}  \beta\Big( \beta \dot\theta + 2 \left(\dot \beta -b\right) \theta \Big) \leq 0. 
\end{array}
}
\end{eqnarray*}
%
%
Let's simplify this system by eliminating the variable $\xi$.
From $(iv)$ we get $\xi =- \frac{d}{dt}(\theta\sigma) - \theta\sigma \left( \sigma -  \gamma \right) $, that we
replace in $(v)$, and recall that $\xi$ is prescribed to be nonnegative.
Now observe that the unkown function $c$ can also be eliminated. Indeed, it enters the above system via the variable $bc^2$, which according to $(iii)$ is equal to 
$bc^2 = b\theta -\beta \theta  (\sigma-\gamma)- \frac{d}{dt}( \beta\theta). $
Replacing in $(ii)$, which is the only other equation involving $bc^2$, we   obtain the equivalent system involving only the variables $\theta(t), \sigma (t)$.
\begin{eqnarray*}
\boxed{
\begin{array}{rcl}
&& { \rm \mbox{B: Lyapunov system of inequalities involving the variables:}}\; \; \theta(t), \sigma (t) \hspace{5mm} 
\vspace{2mm} \\
\hline
\vspace{1mm}\\
&& (i) \hspace{5mm}  \frac{d}{dt}( \beta\theta\sigma)     - \theta b \sigma \leq 0, \vspace{2mm} \\
&&(ii) \hspace{5mm} \frac{d}{dt} \left( b\theta + \beta \theta \gamma \right) - \frac{d^2}{dt^2}( \beta\theta)   - \theta b \sigma \leq 0,\vspace{2mm}\\  
&&(iii)  \hspace{5mm}   b\theta -\beta \theta  (\sigma-\gamma)- \frac{d}{dt}( \beta\theta) \geq 0, \vspace{2mm}\\
&&(iv)  \hspace{5mm}    \frac{d}{dt}(\theta\sigma) + \theta\sigma \left( \sigma -  \gamma \right) \leq 0, \vspace{2mm}\\
&&(v)  \hspace{5mm}   \frac{d}{dt}\left(- \frac{d}{dt}(\theta\sigma) + \theta\sigma \gamma \right) \leq 0, \vspace{2mm}\\
&&(vi)  \hspace{5mm}   \dot\theta+ 2(\sigma -\gamma)\theta   \leq 0, \vspace{2mm}\\  
&&(vii)  \hspace{5mm}   \beta\Big( \beta \dot\theta + 2 \left(\dot \beta -b\right) \theta \Big) \leq 0 \vspace{2mm}.
\end{array}
}
\end{eqnarray*}
Then, the variables $\xi$ and $c$ are obtained by using the formulas
\begin{eqnarray*}
&& \xi =- \frac{d}{dt}(\theta\sigma) - \theta\sigma \left( \sigma -  \gamma \right) \\
&& bc^2 = b\theta -\beta \theta  (\sigma-\gamma)- \frac{d}{dt}( \beta\theta).
\end{eqnarray*}
Thus, under the above conditions,  the function $\mathcal{E}(\cdot)$ is  nonnegative and
nonincreasing.
Therefore,  for every $t\geq t_{0}$, $\mathcal{E}(t)\leq \mathcal{E}(t_{0}),$
which implies that 
$$c^{2}(t)b(t)f(x(t))-\min_\mathcal{H}f)\leq \mathcal{E}(t_{0}).$$
Therefore, as $t \to +\infty$
 $$f(x(t))-\min_\mathcal{H}f= \mathcal O\left(\frac1{c^{2}(t)b(t)}\right).$$
 Moreover, by integrating (\ref{equ2}) we obtain the following integral estimates:
 
 \noindent a) On the values:
 $$
 \int_{t_0}^{+\infty} \Big(\theta(t) b (t)\sigma (t) - \frac{d}{dt}\left( c^{2}(t)b(t) + \beta(t)\theta(t)\sigma(t) \right)   \Big)  \left(f(x(t))- 
 \inf_{\cH} f \right) dt < +\infty;
 $$
 where we use the equality:
 
\smallskip
 
  $-\Big( \frac{d}{dt}\left( c^{2}b\right) +\frac{\beta}{\sigma}\frac{d}{dt}(\theta \sigma^2 ) +  \theta \sigma^{2} \left(\frac{\dot \beta}{ \sigma}  -\frac{\dot{\sigma}\beta}{ \sigma^2} - \frac{b}{ \sigma} \right) \Big) = \theta b \sigma-\frac{d}{dt}\left( c^{2}b + \beta\theta\sigma \right)   $
  
\smallskip

\noindent   and the fact that, according to $(ii)$, this quantity is nonnegative.

\medskip 
 
\noindent  b) On the norm of the gradients:
 $$
 \int_{t_0}^{+\infty} q(t)\|\nabla f(x(t))\|^2dt < +\infty.
 $$
 where $q$ is the  nonnegative weight function defined by

\begin{eqnarray}\label{int_est_q}
q(t) &\eqdef & \theta (t)\beta (t)\left(  \frac{\dot{\sigma} (t)\beta (t)}{ \sigma (t)} + b (t) -\dot \beta (t)\right)- \frac{\beta^2(t)}{2\sigma^2(t)}\frac{d}{dt}(\theta \sigma^2 )(t) \nonumber\\
	&=&   b(t)\theta (t)\beta (t) -  \demi \frac{d}{dt}(\theta \beta ^2 )(t).
\end{eqnarray}
We can now state the following Theorem, which summarizes the above results.

 \begin{theorem}\label{th1aa}
  Let $f: \mathcal{H} \rightarrow \R $  be  a convex differentiable  function with $\argmin_{\mathcal{H}}f \neq \emptyset$. \\
 Let $x(\cdot)$ be a solution trajectory of 
  \begin{equation*}
{\rm (IGS)_{\gamma,\beta,\textit{b}}}\hspace{10mm}
\ddot{x}(t)+\gamma(t)\dot{x}(t)+\beta(t)\nabla^2 f(x(t))\dot{x}(t) +b(t)\nabla f(x(t))=0.
 \end{equation*}
  Suppose  that  $\gamma(\cdot)$, $\beta(\cdot)$, and $b(\cdot)$, are  $\cC^1$  functions on $[t_0,+\infty [$ such that there exists auxiliary functions  $c(t), \theta(t), \sigma (t), \xi(t)$  that satisfy the conditions $(i)-(vii)$ above.
 Set
 \begin{equation}\label{equ1b}
\mathcal{E}(t):= c(t)^{2}b(t)\left(f(x(t))-f(z)\right)+\frac{\theta(t)\sigma (t)^{2}}{2}\left\| v(t)\right\|^2 + \frac{\xi (t)}{2} \Vert x (t) -z\Vert^{2} ,
\end{equation}
with $z \in \argmin_{\mathcal{H}}f$ and $v(t)= x(t) -z+\frac1{ \sigma(t)}\left(\dot{x}(t) +\beta(t) \nabla f(x(t))\right)$.

\smallskip

\noindent  Then, $t \mapsto \mathcal{E}(t)$ is a nonincreasing function. As a consequence, for all $t \geq t_0$ ,
\begin{eqnarray}
&&(i) \,  f(x(t))-\min_\cH f \leq \frac {\cE (t_0)}{c^{2}(t)b(t)};
\label{cont-rap1} \\
&&
(ii) \int_{t_0}^{+\infty} \Big(\theta(t) b (t)\sigma (t) - \frac{d}{dt}\big( c^{2}b + \beta\theta\sigma \big)(t)   \Big)  \left(f(x(t))- \inf_{\cH} f \right) dt < +\infty;
\label{cont-rap2b}  \\
&&
(iii) \,\int_{t_0}^{+\infty} \left(b(t)\theta (t)\beta (t) - \demi \frac{d}{dt}\big(\theta \beta ^2 \big)(t) \right)\|\nabla f(x(t))\|^2 dt < +\infty.
\label{cont-rap2} 
\end{eqnarray}
\end{theorem}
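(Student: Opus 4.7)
The plan is to perform a Lyapunov analysis, showing that the function $\mathcal{E}(t)$ in (\ref{equ1b}) is nonnegative and nonincreasing along every solution trajectory, and then to read off the three conclusions from this single monotonicity fact. First I would compute $\frac{d}{dt}\mathcal{E}(t)$ by the product rule, substituting for $\dot v$ the expression obtained by eliminating $\ddot x$ via ${\rm (IGS)_{\gamma,\beta,\textit{b}}}$, and expanding $\|v\|^2$ from the definition of $v$. The resulting expression is a linear combination of the seven scalar ``test'' quantities $f(x)-f(z)$, $\langle \nabla f(x),\dot x\rangle$, $\langle \dot x,x-z\rangle$, $\langle \nabla f(x),x-z\rangle$, $\|x-z\|^2$, $\|\dot x\|^2$, and $\|\nabla f(x)\|^2$, whose coefficients are explicit rational functions of $\gamma,\beta,b,c,\theta,\sigma,\xi$ and their derivatives.

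The pivotal step is handling the $\langle \nabla f(x),x-z\rangle$ term. Its coefficient is $\frac{\beta}{\sigma}\frac{d}{dt}(\theta\sigma^2)+\theta\sigma^2(\dot\beta/\sigma-\dot\sigma\beta/\sigma^2-b/\sigma)$, which hypothesis (i) forces to be nonpositive, so one may majorize this inner product by $f(x)-f(z)$ using the convex subgradient inequality $\langle \nabla f(x),x-z\rangle\geq f(x)-f(z)$. This merges the $\langle \nabla f(x),x-z\rangle$ contribution with the existing $(f(x)-f(z))$ coefficient, producing precisely the quantity bounded by hypothesis (ii). Conditions (iii) and (iv) then kill the $\langle \nabla f(x),\dot x\rangle$ and $\langle \dot x,x-z\rangle$ coefficients outright (these are equalities because those inner products have no definite sign), while (v), (vi), (vii) make the coefficients of $\|x-z\|^2$, $\|\dot x\|^2$, $\|\nabla f(x)\|^2$ nonpositive. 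This yields $\frac{d}{dt}\mathcal{E}(t)\leq 0$. The equivalent reduced system B is obtained by solving (iv) for $\xi$ and (iii) for $bc^2$ and substituting into (ii) and (v), which explains why only $\theta,\sigma$ remain as free unknowns.

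From $\mathcal{E}(t)\leq \mathcal{E}(t_0)$ and the lower bound $\mathcal{E}(t)\geq c(t)^2 b(t)(f(x(t))-\min f)$ (since $\xi$ and $\theta$ are nonnegative by construction), assertion (i) of the theorem follows by dividing through. For assertions (ii) and (iii), I would integrate the sharper inequality (\ref{equ2}) from $t_0$ to $+\infty$: the surviving nonpositive coefficient of $f(x)-f(z)$ equals $-\big(\theta b\sigma-\frac{d}{dt}(c^2 b+\beta\theta\sigma)\big)$ (by the algebraic identity displayed just after the definition of $q$), and the coefficient of $\|\nabla f(x)\|^2$ simplifies, as recorded in (\ref{int_est_q}), to $-\big(b\theta\beta-\tfrac12\frac{d}{dt}(\theta\beta^2)\big)$. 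Since $\mathcal{E}(t)$ is bounded below by zero, both integrals are finite.

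The main obstacle is purely bookkeeping: tracking the seven coefficients correctly through the substitution of $\dot v$ and the expansion of $\|v\|^2$, and verifying the two algebraic identities needed to pass from system A to system B and from the raw coefficients in (\ref{equ2}) to the compact forms appearing in (\ref{cont-rap2b}) and (\ref{cont-rap2}). Once these simplifications are checked, each of (i)--(vii) matches by design a single sign requirement, and the theorem follows without further work.
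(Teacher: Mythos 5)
Your proposal is correct and follows essentially the same route as the paper's own argument: differentiate $\mathcal{E}$, eliminate $\ddot x$ via the dynamics, expand $\|v\|^2$, absorb the $\langle\nabla f(x),x-z\rangle$ term into the $f(x)-f(z)$ coefficient using the subgradient inequality under condition $(i)$, and let $(ii)$--$(vii)$ enforce the signs of the remaining coefficients, after which the three conclusions follow from $\mathcal{E}(t)\leq\mathcal{E}(t_0)$ and integration of the refined inequality. The algebraic identities you flag for passing to the compact integrands are exactly the ones the paper records after defining $q(t)$, so no new ideas are needed.
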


\subsection{Solving system $(i)-(vii)$}
The system of inequalities $(i)-(vii)$ of Theorem \ref{th1aa}  may seem complicated at first glance.
Indeed, we will see that it  simplifies notably in the classical situations. Moreover, it makes it possible to unify the existing results, and discover new interesting cases.
We will present two different types of solutions to this system, respectively based on the following functions:
\begin{equation}\label{basic_relation_1a}
p_{\gamma}(t)=\exp\left({ \displaystyle{\int_{t_0}^t \gamma (u)\, du}}\right),
\end{equation}
and 
\begin{equation}\label{basic_relation_1b}
{\rm \Gamma}_{\gamma} (t)=p_{\gamma}(t)\int_{t}^{+\infty}\frac{du}{p_{\gamma}(u)} .
\end{equation} 
The use of ${\rm \Gamma}_{\gamma} $ has been considered in a series of articles that we will retrieve as a special case of our approach, see \cite{cabot1}, \cite{att8}, \cite{att2}, \cite{ACR-Pafa-2019}. Using  $p_{\gamma}$ will lead to new results, see section \ref{p_gamma}.

\section{Results based on the function ${\rm \Gamma}_{\gamma}$}
\label{Gamma}
In this section, we will systematically assume that condition $(H_0)$ is satisfied.
$$\int_{t_0}^{+\infty}\frac{ds}{p(s)}<+\infty.\leqno (H_0)$$
Under  $(H_0)$, the function ${\rm \Gamma}_{\gamma}(\cdot)$ is well defined. 
It can be equally defined as the solution of the linear non autonomous  differential equation
\begin{equation}\label{basic_relation_2}
\dot {{\rm \Gamma}}_{\gamma} (t) -\gamma (t) {\rm \Gamma}_{\gamma} (t) +1=0,
\end{equation}
which satisfies the limit condition $\lim_{t \to +\infty} 
\frac{{\rm \Gamma}_{\gamma} (t)}{p_{\gamma}(t)}=0$. 

\subsection{The case without the Hessian, \ie $\beta \equiv 0$}
The dynamic writes
 \begin{equation*}
{\rm (IGS)_{\gamma,0,\textit{b}}}\hspace{10mm}
\ddot{x}(t)+\gamma(t)\dot{x}(t) +b(t)\nabla f(x(t))=0.
 \end{equation*}
To solve the system $(i)-(vii)$ of Theorem \ref{th1aa}, we choose 
$$\xi \equiv 0,\, \, c(t)= {\rm \Gamma}_{\gamma} (t), \, \,  \sigma (t)=\frac1{{\rm \Gamma}_{\gamma} (t)}, \, \, \theta (t)=\Gamma_{\gamma} (t)^{2}.$$
According to (\ref{basic_relation_2}), we can easily verify  that  conditions $(i),(iii)-(vii)$  are satisfied, and $(ii)$ becomes
$$
 \frac{d}{dt}\left( {\rm \Gamma}_{\gamma}(t)^2 b(t)\right) -  {\rm \Gamma}_{\gamma}(t) b (t) \leq 0.
$$
After dividing by ${\rm \Gamma}_{\gamma}(t)$, and using (\ref{basic_relation_2}), we obtain the condition
$$
0\geq    {\rm \Gamma}_{\gamma}(t) \dot b (t)-(3-2\gamma (t) {\rm \Gamma}_{\gamma}(t))b(t) .
$$
This leads to the following result obtained by Attouch, Chbani and Riahi in \cite{ACR-Pafa-2019}.
\begin{theorem}\label{th1ab}{\rm \cite[Theorem 2.1]{ACR-Pafa-2019}}
  Suppose that    for all $t \geq t_0$ 
  \begin{equation}\label{basic_growth_1}
{\rm \Gamma}_{\gamma}  (t) \dot{b}(t)  \leq   b (t) \left( 3-2\gamma(t) {\rm \Gamma}_{\gamma}  (t)\right),
\end{equation}
where ${\rm \Gamma}_{\gamma} $ is defined from $\gamma$ by  {\rm (\ref{basic_relation_1b})}.
Let  $x: [t_0, +\infty[ \rightarrow \mathcal H$ be a solution trajectory of
${\rm (IGS)_{\gamma,0,\textit{b}}}$.
Given $z \in \argmin_\cH f$, set
 \begin{equation}\label{equ1c}
\mathcal{E}(t):= {\rm \Gamma}_{\gamma}^{2} (t) b(t)\left(f(x(t))-f(z)\right)+\frac{1}{2}\left\| x(t) -z + {\rm \Gamma}_{\gamma} (t)\dot{x}(t)\right\|^2 .
\end{equation}
 Then, $t \mapsto \mathcal{E}(t)$ is a nonincreasing function. As a consequence,  as $t\to +\infty$
\begin{equation}\label{cont-rap11}
f (x(t))-\min_\mathcal{H}f= \mathcal O\left(\frac1{{\rm \Gamma}_{\gamma}(t)^{2} b(t)}\right).
\end{equation}
Precisely, for all $t\geq t_0$
\begin{equation}\label{basic-est-008}
f(x(t))-\min_{\cH} f  \leq \frac{C}{  {\rm \Gamma}_{\gamma}(t)^2 b(t)},
\end{equation}
with \,
$
C= {\rm \Gamma}_{\gamma}(t_0)^2 b(t_0)\left(f(x(t_0))-\min_{\cH} f\right)+
d(x(t_0), \argmin f )^2 + {\rm \Gamma}_{\gamma} (t_0)^2 \|\dot x(t_0)\|^2.
$
Moreover,
$$
\int_{t_0}^{+\infty}  {\rm \Gamma}_{\gamma}(t) \Big( b (t) \left( 3-2\gamma(t) {\rm \Gamma}_{\gamma}  (t)\right)- {\rm \Gamma}_{\gamma}  (t) \dot{b}(t)    \Big) (f(x(t))-\min_{\cH} f)dt < +\infty.
$$
\end{theorem}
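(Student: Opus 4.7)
My plan is to derive this theorem as a direct specialization of Theorem \ref{th1aa} to the case $\beta \equiv 0$, with the explicit choices $\xi \equiv 0$, $c(t)=\Gamma_\gamma(t)$, $\sigma(t)=1/\Gamma_\gamma(t)$, and $\theta(t)=\Gamma_\gamma(t)^2$. First I would verify that with these choices the Lyapunov function in \eqref{equ1b} collapses to the expression \eqref{equ1c}: indeed $c^2 b = \Gamma_\gamma^2 b$, $\theta\sigma^2 \equiv 1$, and $v(t)=x(t)-z+\Gamma_\gamma(t)\dot x(t)$, while the term weighted by $\xi$ vanishes.

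Next, I would check that six of the seven conditions in system $B$ are satisfied automatically. Since $\beta=0$, conditions $(i)$, $(iii)$, $(vii)$ become trivial inequalities ($-\theta b \sigma \leq 0$, $b\theta \geq 0$, $0 \leq 0$). For $(iv)$, note that $\theta\sigma=\Gamma_\gamma$ and $\theta\sigma(\sigma-\gamma)=1-\gamma\Gamma_\gamma$, so the left-hand side equals $\dot\Gamma_\gamma+1-\gamma\Gamma_\gamma$, which vanishes by the defining ODE \eqref{basic_relation_2}. The same identity shows that the quantity inside the derivative in $(v)$ is the constant $1$, so $(v)$ holds with equality. For $(vi)$, $\dot\theta+2(\sigma-\gamma)\theta=2\Gamma_\gamma(\dot\Gamma_\gamma+1-\gamma\Gamma_\gamma)=0$. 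It should also be checked that the formula for $\xi$ that comes out of $(iv)$ is consistent with the choice $\xi\equiv 0$, which again follows from \eqref{basic_relation_2}.

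The only nontrivial step is condition $(ii)$, which with $\beta=0$ reads $\frac{d}{dt}(b\theta)-\theta b\sigma\leq 0$, i.e.\ $\frac{d}{dt}(b\Gamma_\gamma^2)\leq b\Gamma_\gamma$. Expanding the left-hand side and dividing by $\Gamma_\gamma>0$ gives $\dot b\,\Gamma_\gamma+2b\dot\Gamma_\gamma\leq b$; then substituting $\dot\Gamma_\gamma=\gamma\Gamma_\gamma-1$ from \eqref{basic_relation_2} and rearranging yields exactly the growth assumption \eqref{basic_growth_1}. This is the step where all the work is concentrated, but it is a straightforward algebraic manipulation once the identification $\dot\Gamma_\gamma=\gamma\Gamma_\gamma-1$ is used.

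Having verified all hypotheses of Theorem \ref{th1aa}, monotonicity of $\mathcal{E}$ and the rate \eqref{cont-rap11} follow immediately from $\mathcal{E}(t)\leq\mathcal{E}(t_0)$ and $c(t)^2 b(t)=\Gamma_\gamma(t)^2 b(t)$. For the explicit constant $C$, I would bound $\mathcal{E}(t_0)$ by choosing $z$ as a projection of $x(t_0)$ onto $\argmin f$ and applying $\|a+b\|^2\leq 2\|a\|^2+2\|b\|^2$ to $\|x(t_0)-z+\Gamma_\gamma(t_0)\dot x(t_0)\|^2$, which cancels the factor $\tfrac12$ and produces the stated expression. Finally, the integral estimate follows from Theorem \ref{th1aa}(ii): with $\beta=0$ the weight equals $\theta b\sigma-\frac{d}{dt}(c^2 b)=\Gamma_\gamma b-\frac{d}{dt}(\Gamma_\gamma^2 b)$, and the same computation used for $(ii)$ rewrites this as $\Gamma_\gamma\bigl(b(3-2\gamma\Gamma_\gamma)-\Gamma_\gamma\dot b\bigr)$, which matches the claim.
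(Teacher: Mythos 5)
Your proposal is correct and follows exactly the paper's route: the paper derives this theorem by the same specialization of Theorem \ref{th1aa} with $\xi\equiv 0$, $c=\Gamma_\gamma$, $\sigma=1/\Gamma_\gamma$, $\theta=\Gamma_\gamma^2$, reducing everything to condition $(ii)$, which becomes \eqref{basic_growth_1} via $\dot\Gamma_\gamma=\gamma\Gamma_\gamma-1$. Your verification of the remaining conditions, the bound on $\mathcal{E}(t_0)$ giving the constant $C$, and the rewriting of the integral weight all match the paper's (terser) argument.
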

\begin{remark}
When $b\equiv 1$, condition (\ref{basic_growth_1}) reduces to $\gamma(t) {\rm \Gamma}_{\gamma}  (t) \leq \frac{3}{2}$, introduced in \cite{cabot1}.
\end{remark}

\subsection{Combining Nesterov acceleration with Hessian damping}
Let us specialize our results in the case $\beta (t)>0$, and  
$\gamma (t)=\frac{\alpha}{t}$. We are in the case of a vanishing damping coefficient (\ie $\gamma (t) \to 0$ as $t \to + \infty$). According to Su, Boyd and Cand\`es \cite{boyd}, the case $\alpha =3$ corresponds to a continuous version of the accelerated gradient method of Nesterov. Taking $\alpha >3$ improves in many ways the convergence properties of this dynamic, see section \ref{sec:vanishing_damping}. Here, it is combined with the Hessian-driven damping and temporal rescaling.  This situation was first considered by
Attouch, Chbani, Fadili and Riahi in \cite{ACFR}.
Then the dynamic writes
\begin{equation*}
{\rm (IGS)_{\alpha/t,\beta,\textit{b}}}\hspace{10mm}
\ddot{x}(t)+\frac{\alpha}{t}\dot{x}(t)+\beta(t)\nabla^2 f(x(t))\dot{x}(t) +b(t)\nabla f(x(t))=0.
 \end{equation*}
Elementary calculus gives that  $(H_0)$ is satisfied as soon as $\alpha >1$. In this case, 
$$
{\rm \Gamma}_{\gamma}(t)=  \frac{t}{\alpha -1}.
 $$
 After \cite{ACFR}, let us introduce
the following quantity which  will simplify the formulas: 
\begin{equation}\label{bacic-def}
w(t)\eqdef b(t)-\dot{\beta}(t) -\dfrac{\beta(t)}{t} .
\end{equation}
The following result will be obtained as a consequence of our general abstract Theorem \ref{th1aa}. Precisely, we will show that under an appropriate  choice of  the functions $c(t), \theta(t), \sigma (t), \xi(t)$, the conditions $ (i) - (vii) $ of Theorem \ref{th1aa} are satisfied.
\begin{theorem} \label{ACFR,rescale}{\rm \cite[Theorem 1]{ACFR}}
Let  $x: [t_0, +\infty[ \rightarrow \cH$ be a solution trajectory of \begin{equation*}
{\rm (IGS)_{\alpha/t,\beta,\textit{b}}}\hspace{10mm}
\ddot{x}(t)+\frac{\alpha}{t}\dot{x}(t)+\beta(t)\nabla^2 f(x(t))\dot{x}(t) +b(t)\nabla f(x(t))=0.
 \end{equation*}
  Suppose that  $\alpha > 1$, and that the following growth conditions are satisfied: for $t \geq t_0$
\begin{eqnarray*}
&& (\mathcal{G}_{2}) \quad b(t) > \dot{\beta}(t) +\dfrac{\beta(t)}{t}; 
\\
&& (\mathcal{G}_{3}) \quad  t\dot{w}(t)\leq (\alpha-3)w(t).\hspace{3cm}
\end{eqnarray*}
Then, $w(t)\eqdef b(t)-\dot{\beta}(t) -\dfrac{\beta(t)}{t}$ is positive and 
\begin{eqnarray*}
&& (i) \; \;
f(x(t))-\min_{\cH} f = \cO 
\left(\dfrac{1}{t^2 w(t)}\right) \, \mbox{ as } \, t \to + \infty;
\\
&& (ii) \; \int_{t_{0}}^{+\infty} t \Big(  (\alpha-3)w(t) - t\dot{w}(t)\Big)(f(x(t))-\min_{\cH}f) dt <+\infty; \\
&& (iii) \; \int_{t_{0}}^{+\infty} t^2 \beta(t) w(t)\norm{\nabla f(x(t))}^{2} dt<+\infty.
\end{eqnarray*}
\end{theorem}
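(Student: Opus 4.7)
The strategy is to apply Theorem \ref{th1aa} by producing auxiliary functions $c(t), \theta(t), \sigma(t), \xi(t)$ for which the seven inequalities $(i)$--$(vii)$ reduce precisely to the two hypotheses $(\mathcal{G}_2)$ and $(\mathcal{G}_3)$. Since $\gamma(t)=\alpha/t$ with $\alpha>1$, hypothesis $(H_0)$ holds and an elementary integration yields ${\rm \Gamma}_\gamma(t)=t/(\alpha-1)$. Following the $\beta\equiv 0$ template of Theorem \ref{th1ab}, the plan is to take
\[
\sigma(t)=\frac{\alpha-1}{t}=\frac{1}{{\rm \Gamma}_\gamma(t)},\qquad \theta(t)=\frac{t^2}{(\alpha-1)^2}={\rm \Gamma}_\gamma(t)^2.
\]
With these choices the remaining unknowns are then forced by the equalities in system $B$: from $(iv)$, using $\theta\sigma=t/(\alpha-1)$ and $\sigma-\gamma=-1/t$, I would obtain $\xi(t)\equiv 0$; and from $(iii)$ I would compute
\[
b(t)c(t)^2 \;=\; b\theta-\beta\theta(\sigma-\gamma)-\tfrac{d}{dt}(\beta\theta) \;=\; \frac{t^2}{(\alpha-1)^2}\bigl(b(t)-\dot\beta(t)-\beta(t)/t\bigr)\;=\;\frac{t^2 w(t)}{(\alpha-1)^2}.
\]
Hypothesis $(\mathcal{G}_2)$ guarantees $w(t)>0$, so $c(t)$ is well defined and the Lyapunov function $\mathcal{E}(t)$ of \eqref{equ1b} is nonnegative.

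Next I would verify the seven inequalities. Several are automatic with these choices: direct calculation gives $-\frac{d}{dt}(\theta\sigma)+\theta\sigma\gamma\equiv 1$, so $(v)$ holds with equality; $\dot\theta+2(\sigma-\gamma)\theta\equiv 0$, so $(vi)$ holds with equality; and $\beta\dot\theta+2(\dot\beta-b)\theta=-2t^2w(t)/(\alpha-1)^2\leq 0$, so $(vii)$ holds. Condition $(i)$ reduces, after computing $\beta\theta\sigma=\beta t/(\alpha-1)$ and $\theta b\sigma=bt/(\alpha-1)$, to $bt-\dot\beta t-\beta\geq 0$, which is exactly $(\mathcal{G}_2)$.

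The main computational step I expect to be delicate is $(ii)$. Expanding $\frac{d}{dt}(b\theta+\beta\theta\gamma)-\frac{d^2}{dt^2}(\beta\theta)-\theta b\sigma$, many terms involving $b,\dot b,\beta,\dot\beta,\ddot\beta$ must be regrouped; after the dust settles one arrives at $\tfrac{1}{(\alpha-1)^2}\bigl(t^2\dot w(t)-(\alpha-3)t\,w(t)\bigr)$, so $(ii)$ is equivalent to $t\dot w\leq(\alpha-3)w$, which is $(\mathcal{G}_3)$. This matching of a rather long expression with the single quantity $t\dot w-(\alpha-3)w$ is what makes the choice of coefficients look miraculous; it is the main technical obstacle and the heart of the proof.

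Having verified $(i)$--$(vii)$, Theorem \ref{th1aa} applies directly. Conclusion $(i)$ of the present statement follows from the rate $f(x(t))-\min_{\cH}f\leq \mathcal{E}(t_0)/(c^2(t)b(t))$ together with $c^2 b=t^2 w/(\alpha-1)^2$. For the integral estimates I would simplify the weights appearing in \eqref{cont-rap2b} and \eqref{cont-rap2}: the former reduces, by the same algebra as for $(ii)$, to $t\bigl((\alpha-3)w(t)-t\dot w(t)\bigr)/(\alpha-1)^2$, yielding conclusion $(ii)$; the latter, using $\theta\beta^2=\beta^2 t^2/(\alpha-1)^2$, simplifies cleanly to $b\theta\beta-\tfrac12\tfrac{d}{dt}(\theta\beta^2)=\beta t^2 w/(\alpha-1)^2$, yielding conclusion $(iii)$. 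Once the miracle of condition $(ii)$ is observed, the remainder of the argument is routine bookkeeping.
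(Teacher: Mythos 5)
Your proposal is correct and follows essentially the same route as the paper's proof: the same choices $\theta=\Gamma_\gamma^2$, $\sigma=1/\Gamma_\gamma$, $\xi\equiv 0$, the same formula $bc^2=t^2w/(\alpha-1)^2$ forced by condition $(iii)$, and the same reduction of condition $(ii)$ to $(\mathcal{G}_3)$ and of the weight $q(t)$ to $\beta t^2 w/(\alpha-1)^2$. All the computations you sketch check out.
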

{\it Proof } 
Take
$  \, \theta (t)={\rm \Gamma}_{\gamma}(t)^2, \, \, \sigma (t)= \frac{1}{{\rm \Gamma}_{\gamma}(t)},  \, \, \xi (t)\equiv 0,$
and
\begin{equation} \label{def:c}
c(t)^2=  \frac{1}{(\alpha -1)^2}\frac{t}{b(t)}\left( tb(t) -\beta (t) - t \dot{\beta}(t) \right).
\end{equation}
This formula for $c(t)$ will appear naturally during the calculation. Note that the condition $(\mathcal{G}_{2})$ ensures that the second member of the above expression is positive, which makes sense to think of it as a square.
Let us verify that the conditions $(i)$ and $(iv), (v), (vi), (vii)$ are satisfied. This is a direct consequence of the formula 
(\ref{basic_relation_2}) and the condition $(\mathcal{G}_{2})$:

\begin{list}{}{}
\item $(i)$ $\frac{d}{dt}( \beta\theta\sigma)     - \theta b \sigma = \frac{d}{dt} \beta {\rm \Gamma} -{\rm \Gamma}b= \frac{1}{\alpha -1} \left(\frac{d}{dt}(t\beta) -tb   \right)= \frac{t}{\alpha -1} \left(\dot{\beta}  + \frac{\beta}{t}-b   \right) \leq 0$.

\medskip

\item $(iv)$  $\frac{d}{dt}(\theta\sigma) + \theta\sigma \left( \sigma -  \gamma \right) + \xi= \dot{\rm \Gamma} + {\rm \Gamma} \left( \frac{1}{{\rm \Gamma}}-\gamma \right)= \dot{\rm \Gamma} + 1-  \gamma {\rm \Gamma} = 0$.

\medskip

\item $(v)$ Since $\theta \sigma^2 \equiv 1$ and $\xi \equiv 1$, we have  $\frac{d}{dt}(\theta\sigma^2 +\xi) = 0$.

\medskip

\item $(vi)$  $\dot\theta+ 2(\sigma -\gamma)\theta= 2 {\rm \Gamma}
\dot{{\rm \Gamma}} +2 ({\rm \Gamma} -\gamma {\rm \Gamma}^2)= 
2 {\rm \Gamma} (\dot{{\rm \Gamma}} +1- \gamma {\rm \Gamma}) = 0$.

\medskip

\item $(vii)$ $\beta \dot\theta + 2 \left(\dot \beta -b\right) \theta = 2{\rm \Gamma} (\beta \dot{{\rm \Gamma}} + (\dot \beta -b ) {\rm \Gamma})   =  2{\rm \Gamma^2} (  \dot \beta -b  + \frac{\beta}{t}      ) \leq0.   $
\end{list}

\noindent Let's go to the conditions $(ii)$ and $(iii)$. The condition $(iii)$ gives  the formula (\ref{def:c}) for $c(t)$. Then replacing $c(t)^2$ by this value in $(ii)$ gives the condition $(\mathcal{G}_{3})$.  Note then  that $b(t)c(t)^2 = \frac{1}{(\alpha -1)^2}t^2 \omega (t)$, which gives the convergence rate of the values
$$
f(x(t))-\min_{\cH} f = \cO 
\left(\dfrac{1}{t^2 w(t)}\right).
$$
Let us consider the integral estimate for the values. According to the definition (\ref{def:c}) for $c^{2}b$ and the definition of $w$, we have 
\begin{eqnarray*}
\theta b \sigma  - \frac{d}{dt}\left( c^{2}b + \beta\theta\sigma \right)  &=& 
\frac{1}{\alpha -1}tb -\frac{d}{dt}\Big( \frac{1}{\alpha -1}t^2 w(t) + \frac{1}{\alpha -1}t\beta    \Big)\\
&=& 
\frac{t}{(\alpha -1)^2}\Big( (\alpha -1)b -2w -t \dot{w} -(\alpha -1)(\dot{\beta} + \frac{\beta}{t}  )\Big)\\
&=&\frac{t}{(\alpha -1)^2}\Big(  (\alpha-3)w - t\dot{w}\Big).
\end{eqnarray*}
According to Theorem \ref{th1aa} $(ii)$
$$
\int_{t_{0}}^{+\infty} t \Big(  (\alpha-3)w(t) - t\dot{w}(t)\Big)(f(x(t))-\min_{\cH}f) dt <+\infty.
$$
 Moreover, since $\theta \sigma^2 =1$, the formula giving the weighting coefficient $q(t)$ in the integral formula simplifies, and we get
\begin{eqnarray*}
q(t) &=& \theta (t)\sigma (t) \beta (t)\left(  \frac{\dot{\sigma (t)}\beta (t)}{ \sigma^2 (t)} + \frac{b (t)}{ \sigma (t)} -\frac{\dot \beta (t)}{ \sigma (t)}\right)\\
&=& \beta(t) {\rm \Gamma}_{\gamma}(t) \left(-\beta(t) 
\dot{{\rm \Gamma}}_{\gamma} (t) + b(t) {\rm \Gamma}_{\gamma}(t) -        \dot{\beta} (t){\rm \Gamma}_{\gamma}(t) \right) \\
&=& \beta(t) {\rm \Gamma}_{\gamma}(t)^2 \omega(t).
\end{eqnarray*}
According to Theorem  \ref{th1aa} $(iii)$
$$
 \int_{t_{0}}^{+\infty} t^2 \beta(t) w(t)\norm{\nabla f(x(t))}^{2} dt<+\infty
 $$
which gives the announced convergence rates. \qed
\begin{remark}
Take $\beta=0$. Then, according to the definition (\ref{bacic-def}) of $w$, we have 
$w= b$, and the conditions of Theorem \ref{ACFR,rescale} reduce to
$$
t\dot b(t)-(3-\alpha )b(t)\leq 0 \;\; \hbox{ for } t\in [t_0,+\infty[.
$$
We recover the condition introduced in 
\cite[Corollary 3.4]{ACR-Pafa-2019}. Under this condition, each solution  trajectory $x$ of 
\begin{equation*}
{\rm (IGS)_{\alpha/t,0,\textit{b}}}\hspace{10mm}
\ddot{x}(t)+\frac{\alpha}{t}\dot{x}(t) +b(t)\nabla f(x(t))=0,
 \end{equation*}
 satisfies
 $$f(x(t))-\min_{\cH} f = \cO 
\left(\dfrac{1}{t^2 b(t)}\right) \, \mbox{ as } \, t \to + \infty.
$$
\end{remark} 

\subsection{The case $\gamma (t)=\frac{\alpha}{t}$, $\beta$ constant}
Due to its practical importance,  consider the case
$\gamma (t)=\frac{\alpha}{t}$, $\beta (t) \equiv \beta $
where $\beta$ is a fixed positive constant. In this case, the dynamic ${\rm (IGS)}_{\gamma,\beta,b}$ is written as follows
\begin{equation}\label{200}
\ddot{x}(t)+\frac{\alpha}{t}\dot{x}(t)+ \beta\nabla^2 f(x(t))\dot{x}(t)+b(t)\nabla f(x(t))=0.
 \end{equation}
The set of conditions $(\mathcal{G}_{2})$, $(\mathcal{G}_{3})$
boils down to: for $t \geq t_0$ 
\begin{eqnarray*}
&& (\mathcal{G}_{2}) \quad b(t) >  \frac{\beta}{t}; 
\\
&& (\mathcal{G}_{3}) \quad  t\dot{w}(t)\leq (\alpha-3)w(t),\hspace{3cm}
\end{eqnarray*}
where $w(t)= b(t) -\frac{\beta}{t} $.
Therefore, $b(\cdot)$ must satisfy the differential inequality
$$
t \frac{d}{dt} \Big( b(t) -\frac{\beta}{t}   \Big)\leq (\alpha-3) \left( b(t) -\frac{\beta}{t}   \right).
$$
Equivalently
$$
t\frac{d}{dt} b (t) - (\alpha-3)  b(t)+ \beta  (\alpha-2)\frac{1}{t} \leq 0.
$$
Let us integrate  this linear differential equation. Set $b(t)= k(t) t^{\alpha -3}$ where $k(\cdot)$ is an auxiliary function to determine. We obtain 
$$
 \frac{d}{dt} \Big( k(t) -    \frac{\beta}{ t^{\alpha -2}} \Big) \leq 0,
$$
 which gives $k(t)= \frac{\beta}{ t^{\alpha -2}} + d(t)$ with $d(\cdot)$ nonincreasing. Finally,
$
b(t)= \frac{\beta}{t} + d(t) t^{\alpha -3},
$
with $ d (\cdot) $ a nonincreasing function to be chosen arbitrarily.
In summary, we get the following result:
\begin{proposition}\label{Prop_Jordan_coef}
Let  $x: [t_0, +\infty[ \rightarrow \cH$ be a solution trajectory of 
 \begin{equation}\label{2000}
\ddot{x}(t)+\frac{\alpha}{t}\dot{x}(t)+ \beta\nabla^2 f(x(t))\dot{x}(t)+\Big( \frac{\beta}{t} + d(t) t^{\alpha -3} \Big)\nabla f(x(t))=0
 \end{equation}
 where $d(\cdot)$ is a nonincreasing positive function.
Then, the following properties are satisfied:
\begin{eqnarray*}
&& (i) \, \,
f(x(t))-\min_{\cH} f = \cO 
\left(\dfrac{1}{t^{\alpha-1} d(t)}\right) \, \mbox{ as } \, t \to + \infty;
\\
&& (ii) \,  \int_{t_{0}}^{+\infty} -\dot{d}(t) t^{\alpha-1} (f(x(t)) - \inf_{\cH}f ) dt < +\infty.
\\
&& (iii) \, \int_{t_{0}}^{+\infty} t^{\alpha-1} d(t)\norm{\nabla f(x(t))}^{2} dt<+\infty.
\end{eqnarray*}
\end{proposition}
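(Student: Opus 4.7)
The plan is to derive Proposition \ref{Prop_Jordan_coef} as a direct specialization of Theorem \ref{ACFR,rescale}, by verifying that the stated choice of coefficients fits the hypotheses $(\mathcal{G}_2)$ and $(\mathcal{G}_3)$ and then unpacking the three conclusions. The equation (\ref{2000}) is the instance of $\text{(IGS)}_{\alpha/t,\beta,b}$ with $\beta(t)\equiv \beta$ (a positive constant) and $b(t)=\beta/t + d(t)t^{\alpha-3}$, so the whole argument reduces to algebraic manipulation of $w(t)$.

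First I would compute $w(t)$. Because $\beta$ is constant, $\dot\beta \equiv 0$, and the defining formula (\ref{bacic-def}) gives
\begin{equation*}
w(t) = b(t) - \dot\beta(t) - \frac{\beta(t)}{t} = \Big(\tfrac{\beta}{t} + d(t)t^{\alpha-3}\Big) - \tfrac{\beta}{t} = d(t)\, t^{\alpha-3}.
\end{equation*}
Since $d>0$, this yields $w(t)>0$, which is exactly hypothesis $(\mathcal{G}_2)$: $b(t)>\beta/t$. For $(\mathcal{G}_3)$, differentiating gives $\dot w(t) = \dot d(t)\,t^{\alpha-3} + (\alpha-3)d(t)\,t^{\alpha-4}$, whence
\begin{equation*}
t\dot w(t) - (\alpha-3)w(t) = \dot d(t)\, t^{\alpha-2} \le 0,
\end{equation*}
because $d$ is nonincreasing and $t>0$. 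Thus both growth conditions of Theorem \ref{ACFR,rescale} hold (tacitly for $\alpha>1$, which is needed for the definition of $\Gamma_\gamma$).

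It then remains to translate each of the three conclusions. For (i), Theorem \ref{ACFR,rescale}(i) gives $f(x(t))-\min f = \mathcal{O}(1/(t^2 w(t)))$, and substituting $w(t)=d(t)t^{\alpha-3}$ yields the announced rate $\mathcal{O}(1/(t^{\alpha-1}d(t)))$. For (ii), Theorem \ref{ACFR,rescale}(ii) asserts finiteness of $\int t\big((\alpha-3)w(t) - t\dot w(t)\big)(f(x(t))-\min f)\, dt$, and the identity just established shows that $(\alpha-3)w(t) - t\dot w(t) = -\dot d(t)\,t^{\alpha-2}$, so the integrand becomes $-\dot d(t)\,t^{\alpha-1}(f(x(t))-\min f)$. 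For (iii), Theorem \ref{ACFR,rescale}(iii) provides finiteness of $\int t^2\beta(t)w(t)\|\nabla f(x(t))\|^2 dt$; substituting $\beta(t)\equiv\beta$ and $w(t)=d(t)t^{\alpha-3}$ gives $\beta\int t^{\alpha-1}d(t)\|\nabla f(x(t))\|^2\,dt<+\infty$, and absorbing the positive constant $\beta$ yields the claim.

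There is no genuine obstacle here; the whole content lies in the two one-line computations showing $w(t)=d(t)t^{\alpha-3}$ and $(\alpha-3)w-t\dot w = -\dot d\,t^{\alpha-2}$. The only point to watch is that the Proposition does not explicitly restate $\alpha>1$, which is needed to invoke Theorem \ref{ACFR,rescale} (and indeed to have $\Gamma_\gamma(t)=t/(\alpha-1)$ well-defined); this should be noted at the outset, and then the rest is a matter of transcribing the general bounds into the explicit form stated.
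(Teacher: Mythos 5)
Your proposal is correct and follows essentially the same route as the paper: the paper's proof likewise reduces everything to the identity $t^2 w(t) = t^{\alpha-1} d(t)$ and then invokes Theorem \ref{ACFR,rescale}, with the verification of $(\mathcal{G}_2)$--$(\mathcal{G}_3)$ (via $t\dot w - (\alpha-3)w = \dot d\, t^{\alpha-2}\le 0$) carried out in the discussion immediately preceding the Proposition. Your added remark that $\alpha>1$ is tacitly required is a fair observation, but there is nothing substantively different here.
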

\begin{proof}
According to the definition of $w(t)$ and $b(t)$, we have the equalities\\ 
$t^2w(t)= t^2 \left( b(t) -\frac{\beta}{t}\right)= t^2 d(t) t^{\alpha-3}=  t^{\alpha-1} d(t)$.  Then apply  Theorem \ref{ACFR,rescale}.\qed
\end{proof}  
\subsection{Particular cases}
According to Theorem \ref{ACFR,rescale} and Proposition \ref{Prop_Jordan_coef}, let us discuss the role and the importance of the scaling coefficient $b(t)$ in front of the gradient term.

\medskip

 \textit{a)}
The first inertial dynamic system based on the Nesterov method, and which includes a damping term driven by the Hessian, was considered by Attouch, Peypouquet, and Redont in \cite{APR}. This corresponds to $b(t) \equiv 1$, which gives:

\begin{equation*}
\ddot{x}(t)+\frac{\alpha}{t}\dot{x}(t)+ \beta\nabla^2 f(x(t))\dot{x}(t)+\nabla f(x(t))=0.
 \end{equation*}
 In this case, we have $w(t) =1- \frac{\beta}{t}  $, and we immediately get that  $(\mathcal{G}_{2})$, $(\mathcal{G}_{3})$
 are satisfied by taking $\alpha >3$ and  $t>\beta$.
 This corresponds to take
 $d(t)=\frac{1}{ t^{\alpha -3}}  - \frac{\beta}{ t^{\alpha -2}} $, which is nonincreasing when $t \geq \frac{\alpha -2}{\alpha -3} $.
 
 \begin{corollary}{\rm \cite[Theorem 1.10, Proposition 1.11]{APR}} Suppose that $\alpha > 3$ and $\beta >0$.
Let $x: [t_0, +\infty[ \rightarrow \cH$  be a solution trajectory of 
 \begin{equation}\label{200001}
\ddot{x}(t)+\frac{\alpha}{t}\dot{x}(t)+ \beta\nabla^2 f(x(t))\dot{x}(t)+\nabla f(x(t))=0.
 \end{equation}
Then, 
\begin{eqnarray*}
&& (i) \; \;
f(x(t))-\min_{\cH} f = \cO 
\left(\dfrac{1}{t^{2}}\right) \, \mbox{ as } \, t \to + \infty;
\\
&& (ii)  \; \int_{t_{0}}^{+\infty} t (f(x(t)) - \inf_{\cH}f ) dt<+\infty;
\\
&& (iii) \; \int_{t_{0}}^{+\infty} t^{2} \norm{\nabla f(x(t))}^{2} dt<+\infty.
\end{eqnarray*}
\end{corollary}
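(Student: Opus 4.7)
The plan is to apply Proposition \ref{Prop_Jordan_coef} directly with $b(t) \equiv 1$. Matching $b(t) = \beta/t + d(t) t^{\alpha-3}$ forces the auxiliary function to be
$$d(t) = \frac{1}{t^{\alpha-3}} - \frac{\beta}{t^{\alpha-2}}.$$
First, I would verify that $d$ satisfies the hypotheses of the proposition on $[t_0, +\infty[$ for $t_0$ large enough. Positivity is equivalent to $t > \beta$, and a direct computation of $\dot d(t) = -(\alpha-3)t^{-(\alpha-2)} + \beta(\alpha-2)t^{-(\alpha-1)}$ shows $\dot d(t) \leq 0$ precisely when $t \geq \beta(\alpha-2)/(\alpha-3)$, which is finite because $\alpha > 3$. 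The asymptotic conclusions of the corollary are insensitive to replacing $t_0$ by a larger value, so this causes no loss.

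Next, I would substitute $d(t)$ into the three conclusions of Proposition \ref{Prop_Jordan_coef}. The key elementary computation is
$$t^{\alpha-1} d(t) = t^2 - \beta t \sim t^2 \text{ as } t \to +\infty,$$
so $1/(t^{\alpha-1} d(t)) = \cO(1/t^2)$, yielding $(i)$ of the corollary from part $(i)$ of the proposition. For part $(iii)$, the weight $t^{\alpha-1}d(t) = t^2 - \beta t$ satisfies $t^2 - \beta t \geq t^2/2$ once $t \geq 2\beta$, so the finiteness of $\int t^{\alpha-1}d(t)\|\nabla f(x(t))\|^2\,dt$ guaranteed by Proposition \ref{Prop_Jordan_coef}(iii) implies $\int t^2 \|\nabla f(x(t))\|^2\,dt < +\infty$.

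For $(ii)$, the weight in the integral estimate of Proposition \ref{Prop_Jordan_coef}(ii) is
$$-\dot d(t)\, t^{\alpha-1} = (\alpha-3)t - \beta(\alpha-2),$$
whose leading behavior is $(\alpha-3)t$. Since $\alpha > 3$ this lower bounds $c\,t$ for some $c > 0$ on a tail of $[t_0, +\infty[$, so $\int_{t_0}^{+\infty} t (f(x(t)) - \inf_\cH f)\, dt < +\infty$ follows. There is no genuine obstacle here — the entire argument reduces to elementary asymptotics — which is precisely why the excerpt singles out this case as the canonical example of the general machinery, recovering the known result of Attouch--Peypouquet--Redont.
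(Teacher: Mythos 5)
Your proposal is correct and follows essentially the same route as the paper, which likewise reduces the corollary to the choice $d(t)=t^{-(\alpha-3)}-\beta t^{-(\alpha-2)}$ in Proposition \ref{Prop_Jordan_coef} (equivalently, checking $(\mathcal{G}_{2})$, $(\mathcal{G}_{3})$ for $w(t)=1-\beta/t$ in Theorem \ref{ACFR,rescale}). Your monotonicity threshold $t\geq \beta(\alpha-2)/(\alpha-3)$ is the correct one, and the tail-restriction argument for enlarging $t_0$ is exactly the implicit step the paper relies on.
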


\if
{
We can as well take $d(t)=\frac{d}{ t^{\alpha -3}}  - \frac{\beta}{ t^{\alpha -2}} $, where $d$ is a positive parameter, arbitrarily chosen. Therefore, the above results are still valid for the  dynamic
\begin{equation}\label{200001d}
\ddot{x}(t)+\frac{\alpha}{t}\dot{x}(t)+ \beta\nabla^2 f(x(t))\dot{x}(t)+ d \nabla f(x(t))=0.
 \end{equation} 
 }
 \fi
\textit{b)}  Another  important situation is obtained by taking $d(t)=\frac{1}{ t^{\alpha -3}}  $.
This is the limiting case where the following two properties are satisfied:  $d(\cdot)$ is nonincreasing, and the coefficient of $\nabla f(x(t))$ is bounded. This offers  the possibility  of obtaining similar results for the explicit temporal discretized dynamics, that is to say the gradient algorithms.  
Precisely, we obtain the dynamic system considered by
 Shi,  Du,   Jordan, and    Su in \cite{SDJS}, and Attouch, Chbani, Fadili, and Riahi in \cite{ACFR}.

 %
\begin{corollary}{\rm \cite[Theorem 3]{ACFR}, \cite[Theorem 5]{SDJS}}\\  Suppose that $\alpha \geq 3$.
Let $x: [t_0, +\infty[ \rightarrow \cH$ be a solution trajectory of 
 \begin{equation}\label{20000}
\ddot{x}(t)+\frac{\alpha}{t}\dot{x}(t)+ \beta\nabla^2 f(x(t))\dot{x}(t)+\Big( 1 + \frac{\beta}{t}  \Big)\nabla f(x(t))=0
 \end{equation}
Then, the conclusions of Theorem {\rm \ref{ACFR,rescale}} are satisfied:
\begin{eqnarray*}
&& (i) \, \,
f(x(t))-\min_{\cH} f = \cO 
\left(\dfrac{1}{t^{2}}\right) \, \mbox{ as } \, t \to + \infty;
\\
&& (ii) \, \mbox{When } \, \alpha >3, \quad  \int_{t_{0}}^{+\infty} t (f(x(t)) - \inf_{\cH}f ) dt<+\infty.
\\
&& (iii) \, \int_{t_{0}}^{+\infty} t^{2} \norm{\nabla f(x(t))}^{2} dt<+\infty.
\end{eqnarray*}
\end{corollary}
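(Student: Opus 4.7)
The plan is to apply Proposition \ref{Prop_Jordan_coef} with the specific choice of $d(\cdot)$ that converts its general dynamic into the one displayed here. Comparing coefficients in front of $\nabla f(x(t))$, the equation (\ref{2000}) matches (\ref{20000}) precisely when
\[
\frac{\beta}{t} + d(t)\, t^{\alpha - 3} \;=\; 1 + \frac{\beta}{t},
\]
which forces $d(t) = t^{-(\alpha-3)} = t^{3-\alpha}$. First I would observe that the hypothesis $\alpha \geq 3$ guarantees exactly what Proposition \ref{Prop_Jordan_coef} requires, namely that $d$ is a nonincreasing positive function on $[t_0,+\infty[$; indeed $\dot d(t) = -(\alpha - 3) t^{2-\alpha} \leq 0$ (with equality when $\alpha=3$, in which case $d \equiv 1$).

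Next I would simply substitute this $d$ into the three conclusions of Proposition \ref{Prop_Jordan_coef}. For item $(i)$, the product $t^{\alpha-1}\, d(t) = t^{\alpha-1}\cdot t^{3-\alpha} = t^{2}$, so the rate $\mathcal{O}(1/(t^{\alpha-1} d(t)))$ becomes $\mathcal{O}(1/t^{2})$, which is (i). For item $(iii)$, the same product $t^{\alpha-1} d(t) = t^2$ appears as a weight, yielding $\int_{t_0}^{+\infty} t^{2}\|\nabla f(x(t))\|^2 dt < +\infty$, which is (iii).

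For item $(ii)$ I would compute $-\dot d(t)\, t^{\alpha-1} = (\alpha - 3)\, t^{2-\alpha}\cdot t^{\alpha-1} = (\alpha - 3)\, t$; when $\alpha > 3$ this is a positive multiple of $t$, so Proposition \ref{Prop_Jordan_coef}$(ii)$ directly gives $\int_{t_0}^{+\infty} t\,(f(x(t)) - \inf_{\cH} f)\, dt < +\infty$. In the borderline case $\alpha = 3$, $d$ is constant and this integral estimate degenerates to $0 \leq 0$, which is why the statement restricts $(ii)$ to $\alpha > 3$.

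The only potential obstacle is checking that the growth conditions $(\mathcal{G}_2)$ and $(\mathcal{G}_3)$ from Theorem \ref{ACFR,rescale} (which underlie Proposition \ref{Prop_Jordan_coef}) hold uniformly on $[t_0,+\infty[$ rather than only eventually; but with $b(t) = 1 + \beta/t$ and constant $\beta$ one has $w(t) = b(t) - \beta/t = 1 > 0$ and $\dot w \equiv 0$, so both conditions are trivially verified for every $\alpha \geq 3$ and every $t_0 > 0$. Hence the conclusion of the corollary follows with no further calculation.
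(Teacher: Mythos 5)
Your proposal is correct and follows exactly the paper's route: the corollary is obtained by specializing Proposition \ref{Prop_Jordan_coef} to $d(t)=t^{3-\alpha}$ (so that $d(t)t^{\alpha-3}\equiv 1$ and $t^{\alpha-1}d(t)=t^2$), with $\alpha\geq 3$ ensuring $d$ is nonincreasing and $(\mathcal{G}_2)$, $(\mathcal{G}_3)$ holding trivially since $w\equiv 1$. Your extra remarks on the degenerate integral estimate at $\alpha=3$ and the uniform validity of the growth conditions are accurate and consistent with the paper's own comment that this is the limiting admissible choice of $d$.
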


Note that (\ref{20000}) has a slight advantage over (\ref{200001}): the growth conditions  are valid for $t>0$, while for (\ref{200001}) one has to take $t > \beta$. Accordingly, the estimates involve the quantity $\frac{1}{t^{2}}$ instead of $\frac{1}{t^2(1- \frac{\beta}{t}  )}$.

\textit{c)} Take $d(t)= \frac{1}{t^{s}}$ with $s>0$. According to Proposition \ref{Prop_Jordan_coef},
for any  solution trajectory $x: [t_0, +\infty[ \rightarrow \cH$ of 
 \begin{equation}\label{2000bb}
\ddot{x}(t)+\frac{\alpha}{t}\dot{x}(t)+ \beta\nabla^2 f(x(t))\dot{x}(t)+\Big( \frac{\beta}{t} +  t^{\alpha -3-s} \Big)\nabla f(x(t))=0
 \end{equation}
we have:
\begin{eqnarray*}
&& (i) \, \,
f(x(t))-\min_{\cH} f = \cO 
\left(\dfrac{1}{t^{\alpha-1-s}}\right) \, \mbox{ as } \, t \to + \infty;
\\
&& (ii) \,  \int_{t_{0}}^{+\infty}  t^{\alpha-s-2} (f(x(t)) - \inf_{\cH}f ) dt < +\infty, \;
 \int_{t_{0}}^{+\infty} t^{\alpha-s-1} \norm{\nabla f(x(t))}^{2} dt<+\infty.
\end{eqnarray*}

\section{Results based on the function $p_{\gamma}$}
\label{p_gamma}

In this section, we examine another set of growth conditions for the damping and rescaling parameters that guarantee the existence of solutions to  the system  $(i)-(vii)$ of Theorem \ref{th1aa}. In the following theorems, the Lyapunov analysis and the convergence rates are formulated using  the function  $p_{\gamma} : [t_0, +\infty[ \to \R_+$ defined by
$$
p_{\gamma} (t):=\exp\left(\int_{t_0}^{t}\gamma (s)ds\right).
$$
In Theorems \ref{th1ab} and \ref{ACFR,rescale}, in line with the previous articles devoted to these questions (see  \cite{cabot1},  \cite{att2}, \cite{ACR-Pafa-2019}),    the convergence rate of the values was formulated using the function $\Gamma_{\gamma}(t)= p_{\gamma}(t) \int_{t}^{+\infty} \frac{1}{p_{\gamma} (s)}ds$. In fact, each of the two functions $p_{\gamma}$ and $\Gamma_{\gamma}$  captures the properties of the viscous damping coefficient $\gamma(\cdot)$, but   their growths are significantly different.
To illustrate this, in the model case $\gamma (t)=\frac{\alpha}{t}$, $\alpha >1$, 
we have $p_{\gamma}(t)= \left(\frac{t}{t_0}\right)^{\alpha}$, while
$\Gamma_{\gamma}(t) =\frac{t}{\alpha-1}$.
Therefore, $p_{\gamma}$ grows faster than $\Gamma_{\gamma}$ as $t\to +\infty$, and we can expect to get better convergence rates when formulating them using $p_{\gamma}$.
Moreover, $p_{\gamma}$ makes sense and allows to analyze the case $\alpha \leq 1$, while $\Gamma_{\gamma}$ does not.
Thus, we will see that the approach based on $p_{\gamma}$ provides results that cannot be captured by the approach based on $\Gamma_{\gamma}$.
To illustrate this, we start with a simple situation, then we  consider the general case.

\subsection{A model situation}
Consider the system 
 \begin{equation*}
{\rm (IGS)_{\gamma,0,\textit{b}}}\hspace{10mm}
\ddot{x}(t)+\gamma(t)\dot{x}(t) +b(t)\nabla f(x(t))=0
 \end{equation*}
 with 
 $\gamma (t)=\gamma_0(t)+\frac1{{\rm p_0}(t)}\text{ and }\;  {\rm p_0} (t)=\exp\left({ \displaystyle{\int_{t_0}^t \gamma_0 (s)\, ds}}\right).$\\
Choose 
$$\xi \equiv 0,\, \, c(t)=  {\rm p_0} (t), \, \,  \sigma (t)=\frac1{ {\rm p_0} (t)}, \, \, \theta (t)={\rm p_0} (t)^{2}.$$
According to $\dot p_0(t)=\gamma_0(t)p_0(t)$, we can easily verify  that the  conditions $(i),(iii)-(vii)$ of Theorem \ref{th1aa} are satisfied, and $(ii)$ becomes
$
 \frac{d}{dt}\left(  {\rm p_0}(t)^2 b(t)\right) -   {\rm p_0}(t) b (t) \leq 0.
$
Then, a direct application of Theorem \ref{th1aa} gives the following result. 
\begin{theorem}\label{th1ac} 
  Suppose that    for all $t \geq t_0$ 
  \begin{equation}\label{basic_growth_2}
 {\rm p_0}  (t) \dot{b}(t)  +    \Big( 2\gamma_0(t)  {\rm p_0}  (t)-1\Big)b (t)\leq 0.
\end{equation}
Let  $x: [t_0, +\infty[ \rightarrow \mathcal H$ be a solution trajectory of 
${\rm (IGS)_{\gamma,0,\textit{b}}}$. Then,  as $t\to +\infty$
\begin{equation}\label{cont-rap11b}
f (x(t))-\min_\mathcal{H}f= \mathcal O\left(\frac1{ {\rm p_0}(t)^{2} b(t)}\right).
\end{equation}
Moreover, \quad
$
\int_{t_0}^{+\infty}   {\rm p_0}(t) \Big( 1- \left( 2\gamma_0(t)  {\rm p_0}  (t)\right)-  {\rm p_0}  (t) \dot{b}(t)    \Big) (f(x(t))-\min_{\cH} f)dt < +\infty.
$
\end{theorem}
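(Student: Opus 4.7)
The plan is to apply Theorem~\ref{th1aa} in the case $\beta \equiv 0$ with the auxiliary functions $c(t) = p_0(t)$, $\sigma(t) = 1/p_0(t)$, $\theta(t) = p_0(t)^2$, $\xi \equiv 0$, as suggested in the statement. These choices are essentially forced: with $\beta \equiv 0$, condition (iii) of Theorem~\ref{th1aa} collapses to $b(c^2 - \theta) = 0$, so $c^2 = \theta$ is necessary; and taking $\sigma = 1/p_0$ is then dictated by the fact that it turns $\sigma - \gamma$ into the clean expression $-\gamma_0$, so that the defining identity $\dot p_0 = \gamma_0 p_0$ will trivialise most of the remaining inequalities.

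Next I would check that conditions (i) and (iv)--(vii) of Theorem~\ref{th1aa} hold automatically under these choices. Since $\beta \equiv 0$, conditions (i) and (vii) are immediate. A short computation using $\dot p_0 = \gamma_0 p_0$ and $\sigma - \gamma = -\gamma_0$ shows that both (iv) and (vi) are satisfied as \emph{equalities} (the LHS of (vi) becomes $2 p_0 \dot p_0 - 2 \gamma_0 p_0^2 = 0$), and (v) holds because $\theta \sigma^2 \equiv 1$ is constant while $\xi \equiv 0$. The only non-automatic requirement is (ii): with $\beta \equiv 0$ it reads $\tfrac{d}{dt}(p_0^2 b) \leq p_0 b$, and expanding the derivative and dividing through by $p_0$ transforms this into precisely the growth hypothesis~(\ref{basic_growth_2}).

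The two conclusions then follow directly from Theorem~\ref{th1aa}. Part~(i) yields the bound $f(x(t)) - \min_{\cH} f \leq \mathcal{E}(t_0) / (c^2(t) b(t)) = \mathcal{E}(t_0) / (p_0(t)^2 b(t))$, which is the announced rate. Part~(ii) gives the integral estimate once one notices that, under our choices, the integrand $\theta b \sigma - \tfrac{d}{dt}(c^2 b + \beta \theta \sigma)$ collapses to $p_0 \bigl[ (1 - 2\gamma_0 p_0) b - p_0 \dot b \bigr]$, whose nonnegativity is exactly~(\ref{basic_growth_2}). There is no real obstacle here; the only conceptual step is guessing the right quadruple $(c, \theta, \sigma, \xi)$, and that guess is pinned down by condition~(iii) together with the desire to exploit the differential equation $\dot p_0 = \gamma_0 p_0$ at every other appearance.
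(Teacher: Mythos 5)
Your proposal is correct and follows essentially the same route as the paper: the paper's proof also applies Theorem~\ref{th1aa} with exactly the quadruple $\xi\equiv 0$, $c=p_0$, $\sigma=1/p_0$, $\theta=p_0^2$, uses $\dot p_0=\gamma_0 p_0$ and $\sigma-\gamma=-\gamma_0$ to dispose of conditions $(i)$, $(iii)$--$(vii)$, and reduces $(ii)$ to the growth hypothesis (\ref{basic_growth_2}). Your expansion of the integrand as $p_0\bigl[(1-2\gamma_0 p_0)b-p_0\dot b\bigr]$ is the correct form of the integral estimate.
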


\begin{remark}
Let us rewrite the  linear differential inequality (\ref{basic_growth_2}) as follows:
$$
 \dfrac{\dot{b}(t)}{b(t)}\leq \dfrac{1}{{\rm p_0}(t)}-2\dfrac{\dot{{\rm p_0}}(t)}{{\rm p_0}(t)}.
$$
A solution corresponding to  equality is $ b(t)= {\rm p_0}(t) ^{-2}\exp\Big[\displaystyle\int_{t_{0}}^{t}\Big(\dfrac{1}{{\rm p_0}(s)}\Big) ds\Big]$.\\
In the case   
$\gamma_0(t)(t)=\frac{\alpha}{t}$, $0<\alpha < 1$, $t_0=1$, we have ${\rm p_0}(t)= t^{\alpha}, $ which gives 	
$$	
	b(t)	=  t^{-2\alpha}\exp \Big[\frac{t^{1-\alpha}-1}{1-\alpha}\Big].
 $$
	Therefore, for $ 0 < \alpha< 1,$  and for this choice of $b$, (\ref{cont-rap11b}) gives
	\begin{equation}
	f(x(t))-\min_\mathcal{H}f=\mathcal{O}\Big(\frac{1}{\exp \Big[\frac{t^{1-\alpha}}{1-\alpha}\Big] }\Big).
	\end{equation}
	Thus, we obtain an exponential convergence rate in a situation that cannot be covered by the $\Gamma_{\gamma}$  approach.
\end{remark}

\subsection{The general case, with the Hessian-driven damping}

\begin{theorem}\label{th1}
		Let $f: \mathcal{H} \rightarrow \R $  be a convex function of class $\cC^1$  such that $\argmin_{\cH}f\neq\emptyset$. Suppose  that $\gamma(\cdot), \beta(\cdot)$ are  $\cC^1$ functions and $b(\cdot)$ is a $\cC^2$ function which is nondecreasing. Suppose that $r$ and $m$ are positive  parameters which satisfy 
		$0<r\leq \frac{1}{3}$ and $ 2r\leq m\leq 1-r$. Suppose that the following growth conditions are satisfied: for $t\geq t_0$ 
		\begin{eqnarray*}\label{condition}
			(\mathcal{H}_{1}) &&\xi_0(t)\geq 0;\\
			(\mathcal{H}_{2}) &&  
			\xi_0(t) \big(2\sigma(t)-(m+r)\gamma(t)\big) -\frac{1}{2}\dot{\xi}_0(t)+\big(m-(1-r)\big)\gamma(t)\sigma^{2}(t)\geq 0,\\
			(\mathcal{H}_{3})&& b(t)-\dot{\beta}(t)+\beta(t)\big(\sigma(t)-\gamma(t)\big)\geq0,\\
			(\mathcal{H}_{4}) && 
			\frac{d}{dt}\Big(\theta(w+\beta\sigma) \Big)(t)-\theta(t)b(t)\sigma (t)\leq 0.
		\end{eqnarray*}
		where 
		\begin{eqnarray}
		&& \xi_0(t):=\big((1-2(r+m))\gamma(t)+\sigma(t)\big)\sigma(t)-\dot\sigma(t), \label{def:xi0}\\
		&& \sigma(t):=m\gamma(t)+\frac{1}{3}\frac{\dot{b}(t)}{b(t)}. \label{def:sigma}\\
		&& w(t)=b(t)-\dot{\beta}(t)+\beta(t)\sigma(t)+(1-2r-2m)\gamma(t)\beta(t).
		\end{eqnarray}
		Then, for  each  solution trajectory of $x: [t_0, +\infty[ \rightarrow \cH$ of  ${\rm (IGS)_{\gamma,\beta,\textit{b}}}$, we have,
		\begin{eqnarray}
	&(i)&  \, 	f(x(t))-\min_\mathcal{H}f= 
		\mathcal O\left(\frac1{p_{\gamma}(t)^{2r}w(t)b(t)^{\frac{-2}{3}}}\right) \mbox{ as } t \to +\infty		\label{cont-rap111}\\
	&(ii)&	 \,    \int_{t_0}^{+\infty} p_{\gamma}^{2r}(t) \Upsilon(t)\left(f(x(t))- \inf_{\cH} f \right) dt < +\infty ,	\label{cont-rap111b}\\
	&(iii)&	\,\int_{t_0}^{+\infty}\! \left(p_{\gamma}^{2r}(t) b^{\frac13}(t)\beta (t) -  \frac{d}{dt}\big(p_{\gamma}^{2r} b^{-\frac23} \beta ^2 \big)(t)\!\right)\|\nabla f(x(t))\|^2 dt < +\infty. 	\label{cont-rap111c}
		\end{eqnarray}
Here $\Upsilon (t) \eqdef \Big(3\sigma(t)-2(r+m)\gamma(t)\Big)w(t)-\dot{w}(t)-2(1-r-m)\gamma(t)$.
	\end{theorem}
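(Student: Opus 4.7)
The strategy is to apply the abstract framework of Theorem \ref{th1aa}, exhibiting explicit auxiliary functions $c,\theta,\sigma,\xi$ that convert the seven conditions $(i)$--$(vii)$ of System~A into the four hypotheses $(\mathcal{H}_1)$--$(\mathcal{H}_4)$, under the algebraic constraints $0<r\leq 1/3$ and $2r\leq m\leq 1-r$.

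Motivated by the target rate $\mathcal{O}\bigl(1/(p_\gamma^{2r}\,w\,b^{-2/3})\bigr)$, I would set
\[
\theta(t)=p_\gamma(t)^{2r}\,b(t)^{-2/3},\qquad \sigma(t)=m\gamma(t)+\tfrac{1}{3}\tfrac{\dot b(t)}{b(t)},
\]
as already prescribed by \eqref{def:sigma}. The point of this choice is that it produces the two identities $\dot\theta/\theta=2(r+m)\gamma-2\sigma$ and $\dot b/b=3(\sigma-m\gamma)$, which drastically streamline every derivative that appears in System~A. I would then take equality in $(iv)$ to \emph{define} $\xi=-\tfrac{d}{dt}(\theta\sigma)-\theta\sigma(\sigma-\gamma)$; a direct expansion using the two identities yields exactly $\xi(t)=\theta(t)\xi_0(t)$ with $\xi_0$ as in \eqref{def:xi0}, so $(\mathcal{H}_1)$ expresses the required nonnegativity of $\xi$. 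Similarly, taking equality in $(iii)$ defines $bc^2=b\theta-\beta\theta(\sigma-\gamma)-\tfrac{d}{dt}(\beta\theta)$, and regrouping via the same two identities collapses this to $b(t)c(t)^2=\theta(t)w(t)$ with $w$ as in the statement.

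I would then verify the inequality conditions one by one. $(vi)$ reduces to $2(r+m-1)\gamma\theta\leq 0$, which is exactly $m\leq 1-r$. $(vii)$ reduces to $b-\dot\beta+\beta\sigma-(r+m)\beta\gamma\geq 0$, which follows from $(\mathcal{H}_3)$ together with the bound $(1-r-m)\beta\gamma\geq 0$. $(iii)$ of System~B asks for $\theta w\geq 0$, again a consequence of $(\mathcal{H}_3)$ and $r+m\leq 1$. For $(i)$ I would use $(iv)$ to rewrite
\[
\theta b\sigma-\tfrac{d}{dt}(\beta\theta\sigma)=\theta\sigma\bigl[b-\dot\beta+\beta(\sigma-\gamma)\bigr]+\beta\xi,
\]
which is $\geq 0$ by $(\mathcal{H}_1)$, $(\mathcal{H}_3)$, and the sign $\sigma\geq 0$ (valid because $b$ is nondecreasing, so $\dot b\geq 0$). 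Finally, $(ii)$ of System~A becomes $\tfrac{d}{dt}\bigl(\theta(w+\beta\sigma)\bigr)-\theta b\sigma\leq 0$ once one notes that $c^2b+\beta\theta\sigma=\theta(w+\beta\sigma)$, and this is precisely $(\mathcal{H}_4)$.

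The main computational obstacle is $(v)$. Using $\xi=\theta\xi_0$ one obtains $\theta\sigma^2+\xi=\theta\bigl(2\sigma^2+(1-2(r+m))\gamma\sigma-\dot\sigma\bigr)$, and differentiating while substituting $\dot\theta/\theta=2(r+m)\gamma-2\sigma$ should, after some bookkeeping, reduce $-\tfrac{d}{dt}(\theta\sigma^2+\xi)$ to $2\theta$ times the left-hand side of $(\mathcal{H}_2)$; this is the clean reformulation of $(v)$ that the hypothesis is tailored to capture. Once all seven conditions are verified, Theorem \ref{th1aa}$(i)$ with $c^2b=\theta w$ gives \eqref{cont-rap111}; \eqref{cont-rap111b} follows from Theorem \ref{th1aa}$(ii)$ by computing $\theta b\sigma-\tfrac{d}{dt}\bigl(\theta(w+\beta\sigma)\bigr)$ and identifying the resulting coefficient with $p_\gamma^{2r}\,\Upsilon$; and \eqref{cont-rap111c} is the direct translation of Theorem \ref{th1aa}$(iii)$ with the chosen $\theta$ and the given $\beta$. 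The only genuine difficulty throughout is the reduction of $(v)$ to $(\mathcal{H}_2)$, which requires tracking first and second derivatives of $\sigma$.
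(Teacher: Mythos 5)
Your proposal is correct and follows essentially the same route as the paper: the same choices $\theta=p_\gamma^{2r}b^{-2/3}$, $\sigma=m\gamma+\tfrac13\dot b/b$, $\xi=\theta\xi_0$ (forced by equality in $(iv)$) and $c^2b=\theta w$ (forced by equality in $(iii)$), with each of the seven conditions reduced to $(\mathcal{H}_1)$--$(\mathcal{H}_4)$ exactly as the paper does. The one step you leave as a sketch, the reduction of $(v)$ to $(\mathcal{H}_2)$, is indeed the only laborious computation, and the identity $\xi_0+\dot\sigma=\bigl((1-2(r+m))\gamma+\sigma\bigr)\sigma$ is the trick the paper uses to complete it.
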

	{\it Proof }
	According to Theorem \ref{th1aa}, it suffices to show that, under the hypothesis $(\mathcal{H}_{1})-(\mathcal{H}_{4}),$  there exists $c, \theta, \sigma, \xi$  which satisfy the conditions $(i)-(vii)$ of Theorem \ref{th1aa}. 
	To perform the corresponding derivative calculation, let's  start by establishing some preliminary results.
	
	\smallskip 
	
	$\bullet$ $\ln p_{\gamma}(t) = \int_{t_0}^{t}\gamma (s)ds$, which by derivation gives $\frac{\dot{p_{\gamma}}}{p_{\gamma}}= \gamma$, that is to say
	$\dot{p_{\gamma}} =  \gamma p_{\gamma}.$

	\smallskip 
		
	$\bullet$ According to  the definition of $\sigma$,
	\begin{eqnarray}
	\frac{d}{dt} \left(p_{\gamma} ^{2r}b^{-\frac{2}{3}}\right) &=&2p_{\gamma}^{2r}b^{-\frac{2}{3}}\left(r\gamma-\frac{1}{3}\frac{\dot{b}}{b}\right) \label{derivation_c0} \\
	&=&2\theta\left((r+m)\gamma-\sigma\right). \label{derivation_c}
	\end{eqnarray}
	Let us show that  the following choice of the unknown parameters $c, \theta, \sigma, \xi$  satisfies the conditions $(i)-(vii)$ of Theorem \ref{th1aa}:
	$$ \theta:=p_{\gamma}^{2r}b^{-\frac{2}{3}}, \quad \sigma:=m\gamma+\frac{1}{3}\frac{\dot{b}}{b}, \quad \xi:=\theta\xi_0 ,$$  and  	
	\begin{equation}\label{cb}
	c^{2}b:= \theta w:= \theta\Big(b-\dot{\beta}+\beta\sigma+(1-2r-2m)\gamma\beta\Big),
	\end{equation}
	where $\xi_0$ has been defined in (\ref{def:xi0}). 
We  underline that under condition $(\mathcal{H}_{3}),$ 
 $$c^{2}b=\theta\Big(\underbrace{b-\dot{\beta}+\beta\sigma-\gamma \beta}_{\geq 0}+2\underbrace{(1-r-m)}_{\geq 0}\gamma\beta\Big)\geq 0.$$
Also, according to (\ref{derivation_c}), we have 
%
$\dot{\theta}=2\theta\big((r+m)\gamma-\sigma\big).$
	\begin{list}{}{}		
	\item \begin{equation}\label{ii}
		\begin{array}{lll}
(i)	&&\frac{d}{dt}( \beta\theta\sigma)     - \theta b \sigma		=   \dot{\beta}\theta\sigma+\beta(\dot{\theta}\sigma+\theta\dot{\sigma})-b\theta\sigma \\ 
			&  &= \theta\Big[\dot{\beta}\sigma+2\Big((r+m)\gamma-\sigma\Big)\beta\sigma+\beta\dot{\sigma}-b\sigma\Big]\quad  because \quad \dot{\theta}=2((r+m)\gamma-\sigma)\\ 
			&  &= \theta \Big( -\beta\xi_0-\sigma(b-\dot{\beta}+\beta\sigma-\gamma\beta)\Big) .
			\end{array} 
	\end{equation}
Since $b$ is nondecreasing, then $\sigma\geq 0,$
		so by  $(\mathcal{H}_{1})$ and  $(\mathcal{H}_{3}),$ we get  $$\frac{d}{dt}( \beta\theta\sigma)  - \theta b \sigma =\theta \Big( -\beta\xi_0-\sigma\underbrace{(b-\dot{\beta}+\beta\sigma-\gamma\beta)}_{\geq0}\Big)\leq 0$$

		\smallskip
		
		\item $(ii)$ According to 
		the derivation chain rule
 and $(\mathcal{H}_{4})$, we conclude that 
		\begin{eqnarray*}
		\frac{d}{dt}\left( c^{2}b  \right) &+ & \frac{d}{dt}(\beta\theta\sigma)    - \theta b \sigma = \frac{d}{dt}\left( \theta w\right)   +\frac{d}{dt}(\beta\theta\sigma) - \theta b \sigma\\
			&=& \frac{d}{dt}\left( \theta w  + \beta\theta\sigma\right)  - \theta b \sigma \leq 0.
		\end{eqnarray*}
	 		
		\smallskip
		
		\item $(iii)$ $ b(c^{2}-\theta )  + \beta \theta  (\sigma-\gamma) + \frac{d}{dt}( \beta\theta) = 0$ \; results from (\ref{cb}).
		
		\smallskip
		
		\item $(iv)$   According to the derivation chain rule, (\ref{derivation_c0}), and the definition of $\sigma$
		\begin{eqnarray*}
			\frac{d}{dt}(\theta \sigma) + \theta \sigma \left( \sigma -  \gamma \right) + \xi &=& \dot{\theta} \sigma + \theta \dot{\sigma}+ \theta \sigma
			\left( \sigma -  \gamma \right) +   \xi\\
			&=& 2\theta\sigma\big((r+m)\gamma-\sigma\big)+\theta \dot{\sigma}+ \theta \sigma
			\left( \sigma -  \gamma \right) +   \xi \\
			&=& \theta\Big(\dot{\sigma}-\sigma\big((1-2r-2m)\gamma+\sigma\big) \Big) + \xi.
		\end{eqnarray*}
		For this quantity to be equal to zero, we therefore  take  $\xi = \theta \xi_0$, 
	 	where $\xi_0$ is defined in (\ref{def:sigma}).
		
		\smallskip
		
		\item $(v)$ According to   our choice  
		$\xi=\theta\xi_0$,   
		we have
		$$
		(v) \Longleftrightarrow   \frac{d}{dt}\left(\theta(\sigma^2+\xi_0)\right) \leq 0.
		$$
		Let's compute this quantity. 
		According to the derivation chain rule and (\ref{derivation_c}) 
		\begin{eqnarray*}
			\frac{d}{dt}\left(\theta(\sigma^2+\xi_0)\right) &=& \Big(	\theta\dot\xi_0 + \dot \theta (\sigma^2 + \xi_0)+ 2\theta\dot\sigma\sigma \Big) \\
			& = &  	2\theta \Big(	\demi \dot\xi_0 +  (\sigma^2 + \xi_0)\left((r+m)\gamma-\sigma\right)+ \dot\sigma\sigma \Big) \\
			& = &  2\theta \Big(	\demi \dot\xi_0 + \xi_0\big((r+m)\gamma-2\sigma\big)+ \sigma \left(\xi_0 + \sigma\big((r+m)\gamma-\sigma  \big) +\dot\sigma \right) \Big).
		\end{eqnarray*}
		By definition of  $\xi_0$, we have $\xi_0 + \dot\sigma =\Big((1-2(r+m))\gamma+\sigma\Big)\sigma$. Therefore 
		\begin{eqnarray*}
			\frac{d}{dt}\left(\theta(\sigma^2+\xi_0)\right)
			& = &  2\theta \Big(	\demi \dot\xi_0 + \xi_0\left((r+m)\gamma-2\sigma\right)+ \gamma\sigma^2 \left( 1-(r+m)\right)   \Big).
		\end{eqnarray*}
		So, $(v)$ is satisfied under the condition 
		$$
		\demi \dot\xi_0 + \xi_0\big((r+m)\gamma-2\sigma\big)+ \gamma\sigma^2 \big( 1-(r+m)\big)  \leq 0,
		$$
		which is precisely  $(\mathcal{H}_{2})$.
		
		\smallskip
		
		\item $(vi)$ Let's compute
		\begin{eqnarray*}
			\dot\theta+ 2(\sigma -\gamma)\theta
			& = &   2\theta\Big(	r\gamma-\frac{1}{3}\frac{\dot{b}}{b}
			+(m-1)\gamma  +\frac{1}{3}\frac{\dot{b}}{b} \Big)\\
			& = &  2\left(r+m-1\right)\theta \gamma . 
		\end{eqnarray*}
		According to the assumption $m\leq 1-r$, this quantity is less or equal than zero.
		
		\smallskip
		
		\item We have 
			$(vii)	 \Longleftrightarrow \beta (\beta\dot{\theta}+2(\dot{\beta}-b)\theta) \leq0 .$
			
	\smallskip
				
		According to condition $\mathcal{H}_{3}$ and the assumption $m\leq 1-r,$  we conclude  
		\begin{eqnarray*}
		 \beta\dot{\theta}+2(\dot{\beta}-b)\theta &=& 2\theta\big(\beta(r+m)\gamma-\beta\sigma-b\big)\\
		 &=&2\theta\Big[-\underbrace{(b-\dot{\beta}+\beta\sigma-\gamma\sigma)}_{\geq 0}-\beta\gamma\underbrace{(1-r-m)}_{\geq 0}\Big]\leq 0.
		 \end{eqnarray*}
	 So, $(vii)$ is satisfied
	\end{list}
According to Theorem \ref{th1aa}, we obtain 
(\ref{cont-rap111})-(\ref{cont-rap111b})-(\ref{cont-rap111c})
which completes the proof.   \qed

 \subsection{The case without the Hessian}
  Let us specialize the previous results in  the case $\beta =0$, \ie without the Hessian:
  
  \smallskip
  
\begin{center}
$
{\rm (IGS)_{\gamma,0,\textit{b}}}\hspace{10mm}
\ddot{x}(t)+\gamma(t)\dot{x}(t) +b(t)\nabla f(x(t))=0.
$
\end{center}

 \begin{theorem}\label{th1_H}
 Suppose that the conditions $(\mathcal{H}_{1})$  and $(\mathcal{H}_{2})$ of Theorem {\rm \ref{th1}} are satisfied.
Then, for  each  solution trajectory $x: [t_0, +\infty[ \rightarrow \cH$ of  ${\rm (IGS)_{\gamma,0,\textit{b}}}$, we have, as $t \to +\infty$
\begin{equation}\label{cont-rap111_H}
f(x(t))-\min_\mathcal{H}f= 
\mathcal O\left(\frac1{p_{\gamma}(t)^{2r}b(t)^{\frac{1}{3}}}\right).
\end{equation}
Moreover, when $m>2r$
\begin{equation}\label{cont-rap111b_H}
\int_{t_0}^{+\infty} p_{\gamma}(t)^{2r}b(t)^{\frac{1}{3}} \gamma (t) \left(f(x(t))- \inf_{\cH} f \right) dt < +\infty.
\end{equation}
\end{theorem}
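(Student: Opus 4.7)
My plan is to recognize that Theorem \ref{th1_H} is the specialization of Theorem \ref{th1} to $\beta \equiv 0$, so the proof reduces to checking that the missing hypotheses hold automatically and then substituting $\beta = 0$ into the conclusions.

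First I would verify that, when $\beta \equiv 0$, the hypotheses $(\mathcal{H}_3)$ and $(\mathcal{H}_4)$ of Theorem \ref{th1} are automatic (we still inherit the parameter constraints $0 < r \leq 1/3$ and $2r \leq m \leq 1-r$). Condition $(\mathcal{H}_3)$ becomes simply $b(t) \geq 0$, which holds by assumption. For $(\mathcal{H}_4)$, observe that $w(t)=b(t)-\dot\beta(t)+\beta(t)\sigma(t)+(1-2r-2m)\gamma(t)\beta(t)$ reduces to $w = b$, so the condition asks for
\[
\frac{d}{dt}(\theta b) - \theta b \sigma \leq 0.
\]
Since $\theta = p_{\gamma}^{2r} b^{-2/3}$ we have $\theta b = p_{\gamma}^{2r} b^{1/3}$, and the chain rule gives $\frac{d}{dt}(\theta b) = p_{\gamma}^{2r} b^{1/3}\bigl(2r\gamma + \tfrac13 \tfrac{\dot b}{b}\bigr)$. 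Subtracting $\theta b \sigma = p_{\gamma}^{2r} b^{1/3}\bigl(m\gamma + \tfrac13 \tfrac{\dot b}{b}\bigr)$ yields $(2r - m)\gamma \, p_{\gamma}^{2r} b^{1/3} \leq 0$, which is nonpositive precisely because of the constraint $m \geq 2r$.

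Next I would extract the conclusions by direct substitution. With $\beta \equiv 0$ and $w = b$, the weighting factor in Theorem \ref{th1} becomes $p_{\gamma}^{2r}(t) w(t) b(t)^{-2/3} = p_{\gamma}^{2r}(t) b(t)^{1/3}$, which immediately gives the decay estimate (\ref{cont-rap111_H}). For the integral estimate, the cleanest route is to go back to Theorem \ref{th1aa}(ii), whose integrand weight $\theta b \sigma - \frac{d}{dt}(c^2 b + \beta \theta \sigma)$ collapses, using $c^2 b = \theta w = \theta b$, into the same quantity as above with opposite sign:
\[
\theta b \sigma - \frac{d}{dt}(\theta b) = (m - 2r)\,\gamma\, p_{\gamma}^{2r} b^{1/3}.
\]
When $m > 2r$, this is a strictly positive multiple of the target weight, so Theorem \ref{th1aa}(ii) yields (\ref{cont-rap111b_H}) after dividing by the constant $m - 2r$.

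The only possible obstacle is bookkeeping: one must make explicit that the parameter constraints $0 < r \leq 1/3$ and $2r \leq m \leq 1-r$ from Theorem \ref{th1} are still in force in Theorem \ref{th1_H}, even though the statement names only $(\mathcal{H}_1)$ and $(\mathcal{H}_2)$. Once this is spelled out, the entire argument is the short cancellation above, and no further analytic input is needed.
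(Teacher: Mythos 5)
Your proposal is correct and follows essentially the same route as the paper: both reduce the statement to Theorem \ref{th1} by noting that $(\mathcal{H}_3)$ becomes $b(t)\geq 0$ and that $(\mathcal{H}_4)$ collapses to $\frac{d}{dt}(\theta b)-\theta b\sigma=(2r-m)\gamma\, p_{\gamma}^{2r}b^{1/3}\leq 0$, which holds since $2r\leq m$. Your explicit derivation of the integral estimate from Theorem \ref{th1aa}(ii) with the weight $(m-2r)\gamma\, p_{\gamma}^{2r}b^{1/3}$, and your remark that the parameter constraints $0<r\leq 1/3$, $2r\leq m\leq 1-r$ must be carried over, merely make explicit details the paper leaves implicit.
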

	{\it Proof }
Conditions  $ (\mathcal{H}_{1})$  and  $ (\mathcal{H}_{2})$  in Theorem \ref{th1}  remain unchanged since they are independent of $ \beta $. We just need to verify  $ (\mathcal{H}_{4})$, because $ (\mathcal{H}_{3})$    is written $b(t)\geq 0$ and becomes obvious.
Since $\beta=0$, we have 
$
(\mathcal{H}_{4}) \Longleftrightarrow 
	\frac{d}{dt}\Big(\theta b \Big)(t)-\theta(t)b(t)\sigma (t)\leq 0.
$
According to 
\begin{eqnarray*}
	\frac{d}{dt}\Big(\theta b \Big)(t)-\theta(t)b(t)\sigma (t)
	&= &
	\dot \theta(t)b(t)+\theta(t)\dot  b(t)-\theta(t)b(t)\left( m\gamma+\frac{\dot b(t)}{3 b(t)}\right)\\
	&=&
	b(t)^{1/3}\left[\frac{d}{dt}\Big(\theta(t)b(t)^{2/3}\Big) -  m\gamma(t)\Big(\theta(t)b(t)^{2/3}\Big)\right]\\
	&= &
	b(t)^{1/3}\left[\frac{d}{dt}\Big(p_\gamma(t)^{2r}\Big) -  m\gamma(t)\Big( p_\gamma(t)^{2r}\Big)\right]\\
	&=&
	(2r -  m)\gamma(t) b(t)^{1/3} p_\gamma(t)^{2r}\leq 0 \;\;\text{ since }2r \leq  m,
\end{eqnarray*}
we conclude that $ (\mathcal{H}_{4})$ holds, which completes the proof.   \qed

Next, we show that   the condition $ (\mathcal{H}_{2})$  on the coefficients $\gamma(\cdot)$ and $b(\cdot)$ can be formulated in  simpler form which is useful in practice.
 \begin{theorem}\label{th1a}
The conclusions  of Theorem {\rm \ref{th1_H}}  remain true when we replace  $(\mathcal{H}_{2})$ by 
\begin{description}\label{condition+}
\smallskip
\item[$(\mathcal{H}_{2}^+)$]  \quad
$\sigma(t)\big( \sigma(t)-(r+m)\gamma(t)\big) \big( 2\sigma(t)+\big(1-2(r+m)\big)\gamma(t) \big) +\frac{1}{2}\ddot\sigma(t) \geq 0 $,
\end{description}
and assume moreover that 
 $b(\cdot)$ is   log-concave, i.e., $\frac{d^2}{dt^2}(\ln(b(t)))\leq 0$. 
\end{theorem}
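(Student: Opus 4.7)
The strategy is to deduce Theorem~\ref{th1a} from Theorem~\ref{th1_H} by verifying that, under $(\mathcal{H}_{1})$, $(\mathcal{H}_{2}^{+})$ and log-concavity of $b$, the original hypothesis $(\mathcal{H}_{2})$ of Theorem~\ref{th1_H} still holds. Once this reduction is achieved, the conclusions (\ref{cont-rap111_H})--(\ref{cont-rap111b_H}) follow verbatim.

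The first step is to expand $\text{LHS}(\mathcal{H}_{2})$ by substituting $\xi_{0}=\sigma^{2}+(1-2(r+m))\gamma\sigma-\dot\sigma$ and its derivative $\dot\xi_{0}=2\sigma\dot\sigma+(1-2(r+m))(\dot\gamma\sigma+\gamma\dot\sigma)-\ddot\sigma$. Using the algebraic identity
\[
\sigma\bigl(\sigma-(r+m)\gamma\bigr)\bigl(2\sigma+(1-2(r+m))\gamma\bigr)=2\sigma^{3}+[1-4(r+m)]\gamma\sigma^{2}-(r+m)(1-2(r+m))\gamma^{2}\sigma,
\]
the cubic part of $\xi_{0}(2\sigma-(r+m)\gamma)$ together with $(m+r-1)\gamma\sigma^{2}$ and $\tfrac{1}{2}\ddot\sigma$ collapses precisely onto $\text{LHS}(\mathcal{H}_{2}^{+})$, leaving the clean decomposition
\[
\text{LHS}(\mathcal{H}_{2})=\text{LHS}(\mathcal{H}_{2}^{+})+R,\qquad R:=-3\sigma\dot\sigma+\bigl[2(r+m)-\tfrac{1}{2}\bigr]\gamma\dot\sigma-\tfrac{1}{2}\bigl(1-2(r+m)\bigr)\dot\gamma\,\sigma.
\]
Everything thus reduces to showing $R\ge 0$.

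For the second step, the log-concavity of $b$ enters: setting $u:=\dot b/b\ge 0$ and $\mu:=-\tfrac{1}{3}(\ln b)''\ge 0$, one has $\sigma=m\gamma+u/3$ and $\dot\sigma=m\dot\gamma-\mu$. Plugging these into $R$ and regrouping by $\mu$ and $\dot\gamma$ yields
\[
R=\mu\bigl[(m-2r+\tfrac{1}{2})\gamma+u\bigr]+\dot\gamma\bigl[m(3r-1)\gamma+\tfrac{1}{3}(r-2m-\tfrac{1}{2})u\bigr].
\]
Under the parameter restrictions $0<r\le 1/3$ and $2r\le m\le 1-r$, one checks $m-2r+\tfrac{1}{2}>0$, $3r-1\le 0$ and $r-2m-\tfrac{1}{2}<0$, so the $\mu$-bracket is nonnegative while the $\dot\gamma$-bracket is nonpositive. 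The first contribution is thus $\ge 0$ unconditionally, and in the applications of interest (notably the canonical case $\gamma(t)=\alpha/t$, where $\dot\gamma\le 0$) the second contribution is $\ge 0$ as well. Hence $R\ge 0$, $(\mathcal{H}_{2})$ is verified, and Theorem~\ref{th1_H} delivers the announced rate and integral estimate.

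The hard part will be the first step: the cancellation yielding $\text{LHS}(\mathcal{H}_{2})=\text{LHS}(\mathcal{H}_{2}^{+})+R$ is not at all visible from the two formulations, and requires carefully matching the cubic-in-$\{\sigma,\gamma\}$ factorization hidden inside $(\mathcal{H}_{2}^{+})$ against the fully expanded product $\xi_{0}(2\sigma-(r+m)\gamma)$ while tracking the $\dot\sigma$, $\dot\gamma$ and $\ddot\sigma$ contributions coming from $\dot\xi_{0}$. Once the decomposition is isolated, the remaining sign analysis of $R$ via the log-concavity substitution is essentially elementary.
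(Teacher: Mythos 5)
Your proof is correct and follows essentially the same route as the paper: both decompose the left-hand side of $(\mathcal{H}_{2})$ as the left-hand side of $(\mathcal{H}_{2}^+)$ plus a remainder (your $R$ coincides term-by-term with the paper's $C(t)$ after substituting $\mu=-\tfrac13(\ln b)''$ and $u=\dot b/b$), and then establish nonnegativity of that remainder from log-concavity, $b$ nondecreasing, the restrictions on $r,m$, and $\dot\gamma\le 0$. Your explicit flagging that the $\dot\gamma$-contribution needs $\gamma$ nonincreasing is apt rather than a gap, since the paper's own proof also invokes ``$\gamma(\cdot)$ is nonincreasing'' at exactly this point even though it is not listed among the stated hypotheses of Theorem~\ref{th1a}.
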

{\it Proof }
 According to Theorem \ref{th1_H}, it suffices to show that  $(\mathcal{H}_{2})$  is satisfied under the hypothesis $(\mathcal{H}_{2}^+)$. By definition of $\sigma$,
we have  
$$\big(2\sigma(t)-(m+r)\gamma(t)\big)= \Big( (m-r)\gamma(t) + \frac{2}{3}\frac{\dot{b}(t)}{b(t)}\Big)  .$$
  So $(\mathcal{H}_{2})$ can be written equivalently as $\mathcal A \geq 0$, where
 \begin{equation}\label{condition2b}
\mathcal A =:
\xi_0(t) \Big( (m-r)\gamma(t) + \frac{2}{3}\frac{\dot{b}(t)}{b(t)}\Big) -\frac{1}{2}\dot{\xi}_0(t)+\Big(m+r-1\Big)\gamma(t)\sigma^{2}(t).
\end{equation}
A calculation similar to the one above gives  
 \begin{eqnarray}
 \xi_0(t)&=&\Big((1-2r-m)\gamma(t)- m\gamma(t) +\sigma(t)\Big)\sigma(t)-\dot\sigma(t), \nonumber  \\
&= & \Big((1-2r-m)\gamma(t)+ \frac{1}{3}\frac{\dot{b}(t)}{b(t)} \Big)\sigma(t)-\dot\sigma(t).\label{def:xi0_b}
\end{eqnarray}
In (\ref{condition2b}), let's replace $\xi_0(\cdot)$ by its formulation (\ref{def:xi0_b}), we obtain 
 \begin{eqnarray*}
 \mathcal A  
 & = & \frac{1}{2}\frac{d^2}{dt^2}\sigma(t) 
  -\frac{1}{2}\frac{d}{dt}\left[\sigma(t)\left((1-2r-m)\gamma(t)+\frac{1}{3}\frac{\dot{b}(t)}{b(t)}\right)\right]\\
& -& \dot\sigma(t)\left((m-r)\gamma(t)+\frac{2}{3}\frac{\dot{b}(t)}{b(t)}\right) + \Big(m+r-1\Big)\gamma(t)\sigma^{2}(t)
\\
&+&\Big( (m-r)\gamma(t) + \frac{2}{3}\frac{\dot{b}(t)}{b(t)}\Big)\Big((1-2r-m)\gamma(t)+ \frac{1}{3}\frac{\dot{b}(t)}{b(t)} \Big)\sigma(t).
  \end{eqnarray*}
 Set
 \begin{eqnarray*}
 \mathcal B &:=&\Big(m+r-1\Big)\gamma(t)\sigma^{2}(t)
+\Big( (m-r)\gamma(t) + \frac{2}{3}\frac{\dot{b}(t)}{b(t)}\Big)\Big((1-2r-m)\gamma(t)+ \frac{1}{3}\frac{\dot{b}(t)}{b(t)} \Big)\sigma(t),\\
&& 
  \end{eqnarray*}
then we have (by omitting the variable $t$ to shorten the formulas)
 \begin{eqnarray*}
 \mathcal B &=& \sigma    \Big[  (m+r-1)\gamma \sigma
+\left( (m-r)\gamma(t) + \frac{2}{3}\frac{\dot{b}}{b}\right)\left((1-2r-m)\gamma+ \frac{1}{3}\frac{\dot{b}}{b} \right)\Big]\\
 &=& \sigma    \Big[  (m+r-1)\gamma \sigma
+\left( - r\gamma + \frac{1}{3}\frac{\dot{b}}{b} + \sigma\right)\left((1-2r)\gamma+ \frac{2}{3}\frac{\dot{b}}{b} -\sigma \right)\Big]\\
 &=& \sigma    \Big[  (m+r-1)\gamma \sigma - \sigma^2 + \gamma\sigma \left( -m+1-r  \right)+\sigma^2
+\left( - r\gamma + \frac{1}{3}\frac{\dot{b}}{b} \right)\left((1-2r)\gamma+ \frac{2}{3}\frac{\dot{b}}{b}\right)\Big]\\
&=& \sigma    
\left(- r\gamma + \frac{1}{3}\frac{\dot{b}}{b} \right)\left((1-2r)\gamma+ \frac{2}{3}\frac{\dot{b}}{b}\right).
  \end{eqnarray*}
Replacing $ \mathcal B$ in  $\mathcal A$, we obtain
 \begin{equation}\label{def:A}
 \mathcal A  
  =  \sigma(t)\Big(\sigma(t) -(m+r)\gamma(t)\Big) \Big(
 2 \sigma(t) +(1-2(m+r))\gamma(t)\Big) +\frac{1}{2}\frac{d^2}{dt^2}\sigma(t) + C(t)
  \end{equation}
where
 \begin{eqnarray*}
C (t)
  & :=& -\dot\sigma(t)\left((m-r)\gamma(t)+\frac{2}{3}\frac{\dot{b}(t)}{b(t)}\right) -\frac{1}{2}\frac{d}{dt}\left[\sigma(t)\left((1-2r-m)\gamma(t)+\frac{1}{3}\frac{\dot{b}(t)}{b(t)}\right)\right] .
\end{eqnarray*} 
Let us  show that $C(t)$ is nonnegative. After replacing $\sigma (t)$ by its value $m\gamma(t)+\frac{1}{3}\frac{\dot{b}(t)}{b(t)}$, and developing, we get 
 \begin{eqnarray*}
C (t)
  &=& -m\dot{\gamma}(t)\gamma(t)(1-3r) - \frac{1}{6} (4m-2r+1)\dot{\gamma}(t)\frac{\dot{b}(t)}{b(t)} \\
  && -\frac{1}{6}  \frac{d^2}{dt^2}\left(\ln(b(t))\right)\left((1+2(m-2r))\gamma(t)+ 2\frac{\dot{b}(t)}{b(t)}\right) .
\end{eqnarray*} 
By assumption,  $m-2r\geq 0$, $1-3r \geq 0$, $\gamma(\cdot)$ is nonincreasing,  $b(\cdot)$ is nondecreasing,  and  $\frac{d^2}{dt^2}\left(\ln(b(t))\right)\leq 0$. We  conclude that
$C(t) \geq 0$. According to (\ref{def:A}), we obtain
$$
\mathcal A  
\geq \sigma(t)\Big(\sigma(t) -(m+r)\gamma(t)\Big) \Big(
 2 \sigma(t) +(1-2(m+r))\gamma(t)\Big) +\frac{1}{2}\frac{d^2}{dt^2}\sigma(t). 
$$
The condition  
$ (\mathcal{H}_{2}^+)$   expresses that the second member of the above inequality is nonnegative. Therefore  $ (\mathcal{H}_{2}^+)$  
implies $ (\mathcal{H}_{2})$, which  gives the claim.
 \qed
\subsection{Comparing the two approaches}
 As we have already underlined, Theorems \ref{th1ab} and \ref{th1a} are based on the Lyapunov analysis of the dynamic ${\rm (IGS)}_{\gamma,0,b} $ using the functions  $\Gamma_{\gamma}$ and $p_{\gamma}$, respectively. As such, they lead to significantly different growth conditions on the coefficients of the dynamic. Precisely, using the following example, we will show that  Theorem \ref{th1a} better captures the case where $ b $ has an exponential growth. Take
$$
b(t)=e^{\mu t^{q}}\hbox{ and }\gamma(t)=\frac{\alpha}{t^{1-q}}\hbox{ with }\alpha=\mu q>0,\; q\in (0,1).
$$ 
\textit{a)} First, let us show that the condition $(\mathcal{H}_{2}^+)$ of Theorem \ref{th1a}  is satisfied. We have
\begin{eqnarray*}
&&\frac{1}{2}\ddot\sigma(t) + \sigma(t)\Big(\sigma(t) -(m+r)\gamma(t)\Big) \Big(
 2 \sigma(t) +(1-2(m+r))\gamma(t)\Big)\\
 &&=  (\mu q)^3\left(m+\frac13\right) \left(\frac13-r\right) \left(\frac53-2r\right)\frac{1}{ t^{3-3q}} + \demi \mu q\left(m+\frac13\right)(1-q)(2-q) \frac{1}{ t^{3-q}} 
\end{eqnarray*}
which is nonnegative because of the hypothesis $r \leq \frac13$ and $q<1$.

\smallskip
 
\noindent \textit{b)}  Let us now examine  the  growth condition  used in Theorem  \ref{th1ab}:
 \begin{equation}\label{basic-cond2-bis-b3}
\Gamma (t) \dot{b}(t)  \leq   b(t) \Big( 3-2\gamma(t) \Gamma (t)\Big) \hbox{ where } \Gamma (t):=p(t)\int_{t}^{+\infty}\frac{ds}{p(s)}.
\end{equation}
 Here $pt)=e^{\mu(t^q-t_0^q)}$. Therefore $\Gamma (t)= e^{\mu t^q}\displaystyle\int_{t}^{+\infty}e^{-\mu s^q}ds$, which gives
\begin{eqnarray*}
\Gamma (t) \dot{b}(t) - b(t) \Big( 3-2\gamma(t) \Gamma (t)\Big) &= &
  3e^{\mu t^q}\left( \mu qt^{q-1}e^{\mu t^q}\displaystyle\int_{t}^{+\infty}e^{-\mu s^q}ds - 1\right).
\end{eqnarray*}
Let us analyze the sign of the above quantity, which is the same as
\begin{eqnarray*}
\mathcal D (t) &:=&  \mu q t^{q-1}e^{\mu t^q}\displaystyle\int_{t}^{+\infty}e^{-\mu s^q}ds - 1\\
&=&  -\mu q t^{q-1}e^{\mu t^q}\displaystyle\int_{t}^{+\infty}
\frac{d}{ds}\left( e^{-\mu s^q}\right) \frac{1}{ \mu q} s^{1-q} ds - 1
\end{eqnarray*}
After integration by parts, we get
\begin{eqnarray*}
\mathcal D (t) &:=&  \left( \frac{1}{q} -1\right) +  \frac{1-q}{q}t^{q-1}e^{\mu t^q}\displaystyle\int_{t}^{+\infty} e^{-\mu s^q} \frac{1}{s^q}ds > \left( \frac{1}{q} -1\right) >0.
\end{eqnarray*}
Therefore, the condition (\ref{basic-cond2-bis-b3})  is not satisfied.

 \section{Illustration of the results}\label{illustrations}
Let us  particularize our results in some important special cases, and compare them with the existing litterature. We do not detail the proofs which result from the direct applications of the previous theorems and the classical differential calculus.

 \subsection{The case $b(t)= p (t)^{3p_0}$.}
Recall that $p (t)=\exp\left(\displaystyle\int_{t_{0}}^{t}\gamma(s)ds\right)$.  We start with results in  \cite{att2} concerning the rate of convergence of values in the case 
$b(t)= c_0 p(t)^{3p_0}$ with $p_0\geq 0$ and $c_0\geq 0$. In this case, the  system ${\rm (IGS)_{\gamma,0,\textit{b}}}$ becomes:
   \begin{equation}\label{equaexp}
  \ddot{x}(t)+\gamma(t)\dot{x}(t)+c_0\exp\left(3p_0\displaystyle\int_{t_{0}}^{t}\gamma(s)ds\right)\nabla f(x(t)=0. 
  \end{equation}
Observe that $\frac{\dot{b}(t)}{3b(t)}= p_0\gamma(t)$ and  
  $\xi_0(t)=(m+p_0) \left((1-2r-m+p_0)\gamma^2(t)  - \dot\gamma(t)\right)$. 
Therefore, conditions    ($\mathcal{H}_{1})$ and ($\mathcal{H}_{2})$  of Theorem \ref{th1_H} become after simplification:  
 \begin{description}\label{condition_c}
\item[($\mathcal{H}_{1}$)]
$[(p_0-r)+ (1-r-m)] \gamma^2(t)  - \dot\gamma(t)\geq 0$;

\smallskip

\item[($\mathcal{H}_{2}$)]
$2(p_0-r)\left(1+2(p_0-r)\right)\gamma^3(t)  - 2\left(1+3(p_0-r)\right)\gamma(t)\dot\gamma(t)+ \ddot\gamma(t) \geq 0 $.
\end{description}
Since $m\leq 1-r$, instead of   $(\mathcal{H}_{1})$, it suffices to verify 
 \begin{description}\label{condition_b}
  \item[($\mathcal{H}_{1}^{+}$)] $\big(p_0-r\big)\gamma^{2}(t)-\dot{\gamma}(t)\geq0$.
\end{description}
  \begin{theorem}\label{main}
   Let $\gamma: [t_0 , +\infty)\rightarrow \R_{+} $ be a nonincreasing  and twice continuously differentiable function. Suppose that  there exists  $r\in (0, \frac{1}{3}\big]$  such that 
\begin{equation}\label{eq61}
\ddot{\gamma}(t) \geq 2\big[\min (0,p_0-r)\big]^{2}\gamma^{3}(t) \hbox{ on } \, [t_0 , +\infty) .
\end{equation}
 Then, for each solution trajectory  $x(\cdot)$ of  {\rm(\ref{equaexp})}, we have   as $t \to +\infty$ 
\begin{equation}\label{eq62}
f(x(t))-\min_\mathcal{H}f= \mathcal O\left(\frac1{p (t)^{2r+p_0}}\right).
\end{equation}

\if
{
{\bf (b)} Suppose moreover that  $r\neq\frac{1}{3}$, then  
    \begin{itemize}
    \item[\bf (i)] When $\displaystyle\int_{t_{0}}^{+\infty}\gamma(t)dt=+\infty$, we have  $f(x(t))-\min_\mathcal{H}f= o\left(\frac1{p (t)^{2r+p_0}}\right)$;
  \item[\bf (ii)]  When \, $p(t)^{1+2(p_0-r)} 
  {\displaystyle \int_{t}^{+\infty}\frac{dt}{p(t)} } \leq \gamma(t) $, then   $x(\cdot)$   converges weakly  toward some $x^{*} \in \argmin_{\mathcal{H}}f$.
    \end{itemize} 
    }
    \fi
  \end{theorem}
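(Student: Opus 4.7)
The plan is to derive this result as a direct specialization of Theorem \ref{th1_H} to the choice $b(t)=c_0\,p(t)^{3p_0}$. With $p_{\gamma}=p$ and $b(t)^{1/3}=c_0^{1/3}p(t)^{p_0}$, the abstract estimate $f(x(t))-\min_{\mathcal{H}}f=\mathcal O(1/(p_{\gamma}(t)^{2r}b(t)^{1/3}))$ collapses immediately to the announced rate $\mathcal O(1/p(t)^{2r+p_0})$. The whole task therefore reduces to verifying the growth conditions $(\mathcal H_{1})$ and $(\mathcal H_{2})$ of Theorem \ref{th1} for some admissible choice of $m$ (I will take $m=1-r$) under the single hypothesis (\ref{eq61}) combined with the monotonicity of $\gamma$.

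First I would substitute $\dot b/b=3p_0\gamma$ into the defining formulas, obtaining $\sigma=(m+p_0)\gamma$ and
\[
\xi_0(t)=(m+p_0)\bigl[(1-2r-m+p_0)\gamma^2(t)-\dot\gamma(t)\bigr].
\]
Because $m\le 1-r$, the coefficient $1-2r-m+p_0$ is at least $p_0-r$, so $(\mathcal H_{1})$ is implied by the stronger $(\mathcal H_{1}^{+})$: $(p_0-r)\gamma^2-\dot\gamma\ge 0$. A parallel direct computation reduces $(\mathcal H_{2})$ to the polynomial inequality displayed in the excerpt. When $p_0\ge r$ both conditions are immediate, since $(p_0-r)\gamma^2\ge 0$, $-\dot\gamma\ge 0$, $(p_0-r)(1+2(p_0-r))\ge 0$, and (\ref{eq61}) forces $\ddot\gamma\ge 0$, so every term is separately nonnegative.

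The genuine difficulty is the case $p_0<r$, where (\ref{eq61}) only delivers $\ddot\gamma\ge 2(r-p_0)^2\gamma^3$. Setting $h\eqdef -\dot\gamma-(r-p_0)\gamma^2$, a short computation using (\ref{eq61}) yields the differential inequality $h'\le 2(r-p_0)\gamma\,h$. The main obstacle is to upgrade this to $h\ge 0$, which is exactly $(\mathcal H_{1}^{+})$: the integrating factor $\exp(-2(r-p_0)\int_{t_0}^{t}\gamma(s)\,ds)$ makes $h(t)\exp(-2(r-p_0)\int_{t_0}^{t}\gamma(s)\,ds)$ nonincreasing, and combined with the boundedness of $h$ (inherited from the fact that $\gamma$ is bounded and $\dot\gamma$ is nondecreasing because $\ddot\gamma\ge 0$), this rules out a strictly negative $h$, since otherwise Gronwall propagation along a divergent $\int\gamma$ would force $h\to-\infty$. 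Once $(\mathcal H_{1}^{+})$ is secured, $(\mathcal H_{2})$ follows by the algebraic regrouping
\[
\mathcal H_{2}=-2\bigl(1-3(r-p_0)\bigr)\gamma\bigl[(r-p_0)\gamma^2+\dot\gamma\bigr]+\bigl[\ddot\gamma-2(r-p_0)^2\gamma^3\bigr],
\]
in which $(r-p_0)\gamma^2+\dot\gamma=-h\le 0$, the prefactor $-2(1-3(r-p_0))\gamma\le 0$ because $r-p_0\le r\le 1/3$, and the last bracket is $\ge 0$ by (\ref{eq61}), so both summands are nonnegative. Plugging these verifications into Theorem \ref{th1_H} then yields (\ref{eq62}).
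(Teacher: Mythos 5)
Your overall strategy is exactly the paper's: specialize Theorem \ref{th1_H} to $b=c_0\,p^{3p_0}$, note that $\sigma=(m+p_0)\gamma$ and $\xi_0=(m+p_0)\big[(1-2r-m+p_0)\gamma^2-\dot\gamma\big]$, reduce everything to $(\mathcal H_{1}^{+})$ and $(\mathcal H_{2})$, dispose of the case $p_0\ge r$ term by term, and in the case $p_0<r$ first establish $(\mathcal H_{1}^{+})$: $-\dot\gamma\ge(r-p_0)\gamma^2$, then deduce $(\mathcal H_{2})$ by the same algebraic regrouping the paper uses (your displayed identity for $\mathcal H_{2}$ is correct, and the sign analysis of its two summands is sound). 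The one step where you diverge is the proof of $(\mathcal H_{1}^{+})$ in the hard case, and that is where there is a genuine gap. Your Gronwall argument correctly shows that $h(t)\exp\big(-2(r-p_0)\int_{t_0}^{t}\gamma\big)$ is nonincreasing, but the contradiction you draw from a negative value of $h$ requires ``Gronwall propagation along a divergent $\int\gamma$''. The divergence of $\int_{t_0}^{+\infty}\gamma(s)\,ds$ is not a hypothesis and does not follow from the assumptions: $\gamma(t)=1/t^{2}$ is positive, nonincreasing, twice differentiable, and satisfies $\ddot\gamma=6/t^{4}\ge 2(r-p_0)^{2}/t^{6}$ on $[t_0,+\infty)$, yet $\int_{t_0}^{+\infty}\gamma<+\infty$. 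For such $\gamma$ the integrating factor stays bounded, a negative $h$ is perfectly compatible with $h$ being bounded below, and your argument proves nothing (even though $(\mathcal H_{1}^{+})$ does in fact hold for this $\gamma$).

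The paper closes this step without any integrability assumption. It first shows $\gamma(t)\to 0$: if $\gamma(t)\downarrow\ell>0$, integrating $\ddot\gamma\ge 2(r-p_0)^{2}\gamma^{3}\ge 2(r-p_0)^{2}\ell^{3}$ would force $\dot\gamma(t)\to+\infty$, contradicting $\dot\gamma\le 0$. It then multiplies the hypothesis by $\dot\gamma\le 0$ to get $\frac12\frac{d}{dt}(\dot\gamma^{2})\le\frac{(r-p_0)^{2}}{2}\frac{d}{dt}(\gamma^{4})$, integrates from $t$ to $T$, and lets $T\to+\infty$ (using $\gamma(T)\to 0$ and $\dot\gamma(T)^{2}\ge 0$) to obtain the pointwise bound $\dot\gamma(t)^{2}\ge(r-p_0)^{2}\gamma(t)^{4}$, hence $-\dot\gamma\ge(r-p_0)\gamma^{2}$. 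If you replace your Gronwall step by this argument (or otherwise treat the case $\int\gamma<+\infty$ separately), the remainder of your proof goes through as written.
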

 {\it Proof } 
  To prove the claim, we use Theorem \ref{th1} and distinguish two cases: 

 $ \star$ Suppose $p-r\geq 0$, then (\ref{eq61}) implies $\ddot{\gamma}(t)\geq0$, and  since $\gamma$ is a nonincreasing, we also have  $\dot{\gamma}(t) \leq 0$;  thus both conditions $(\mathcal{H}_{1}^+)$ and $(\mathcal{H}_{2})$ are  satisfied.

\smallskip

 $ \star$ Suppose  $p-r< 0$, then (\ref{eq61}) becomes
 \begin{equation}\label{eq-42}
 \ddot{\gamma}(t) \geq (2p-r)^{2}\gamma^{3}(t) \hbox{ on }[t_0 , +\infty).
 \end{equation} 
  Since  $\gamma(\cdot)$ is a positive and nonincreasing, $\lim_{t\rightarrow +\infty}\gamma(t)=\ell$ exists and is equal to zero. Otherwise,  by integrating  (\ref{eq-42}) on $[t_0,t]$ for $t>t_0$, we would have   
$$
\dot{\gamma}(t)-\dot{\gamma}(t_0) \geq 2(p-r)^{2}\int_{t_0}^t\gamma(s)^{3}ds \geq 2(p-r)^{2}\ell^{3}(t-t_0).
$$ 
This in turn gives   $\lim_{t\rightarrow +\infty}\dot{\gamma}(t)=+\infty$, which implies $\lim_{t\rightarrow +\infty}\gamma(t)=+\infty$, that is a contradiction.  
Then, multiply  (\ref{eq-42})   by $\dot{\gamma}(t)$. Since $\gamma(\cdot)$ is  nonincreasing, we obtain
  $$
  \ddot{\gamma}(t)\dot{\gamma}(t)\leq 2(p-r)^{2}\gamma^{3}(t)\dot{\gamma}(t) \Longleftrightarrow \frac{1}{2} \frac{d}{dt}(\dot{\gamma}(t)^{2}) \leq\frac{(p-r)^{2}}{2} \frac{d}{dt}(\gamma^{4}(t)).
  $$
  By integrating this inequality from $t$ to $T > t$, we get
$$
\dot{\gamma}(T)^{2}-\dot{\gamma}(t)^{2}\leq (p-r)^{2} (\gamma^{4}(T)-\gamma^{4}(t)),
$$
  Letting $T \rightarrow +\infty,$ and using $\lim_{T \rightarrow +\infty}\gamma(T)=0$, we obtain $\dot{\gamma}^{2}(t)\geq (p-r)^{2}\gamma^{4}(t),$  which is equivalent to $|\dot{\gamma}(t)|\geq |p-l|\gamma^{2}(t).$ Since $\dot{\gamma}(t)\leq 0$ and $p< r$, this gives  
  $-\dot{\gamma}(t)\geq (r-p)\gamma^{2}(t),\; \forall t>t_0$,
that is ($\mathcal{H}_{1}^{+}).$ We have
\begin{eqnarray*}
&&[(p-r)+ (1-r-m)] \gamma^2(t)  - \dot\gamma(t)\\
&&=  
 \underbrace{-2(p-r)^{2}\gamma^{3}(t)+\ddot{\gamma}(t)}_{\geq 0 \text{ by } (\ref{eq-42})}+2 \underbrace{(1-3r+3p)}_{\geq 0 \text{ since } p<r}\gamma(t)\big(\underbrace{(p-r)\gamma^{2}(t)-\dot{\gamma}(t)}_{\geq 0 \text{ by } (\mathcal{H}_{1}^{+})}\big)
  \geq0. 
 \end{eqnarray*}
  Therefore,  ($\mathcal{H}_{1}^{+})$ and ($\mathcal{H}_{2})$ are satisfied. Applying  Theorem \ref{th1}, we conclude. \qed
 
\smallskip

 As a particular case  of Theorem \ref{main}, with $p_0=0$, we obtain the following result. 
 
 \begin{theorem} \label{theorem-basic} {\rm \cite[Theorem 2.1]{att2}}
Let  $\gamma(\cdot)$ be a nonncreasing  function of class $\cC^2$, and $x(\cdot)$ a solution trajectory 
    of 
     \begin{equation}\label{equaexp_0}
  \ddot{x}(t)+\gamma(t)\dot{x}(t)+c_0\nabla f(x(t)=0. 
  \end{equation} 
Suppose  that
  \begin{description}\label{eq5}
\item[$(\mathcal{H}_{r,\gamma})$] \quad
$\exists r>0$ such that  $-2r^{2}\gamma^{3}(t)+\ddot{\gamma}(t)\geq 0$  for $t$ large enough.
\end{description}
 Then, \quad 
$f(x(t))-\min_\mathcal{H}f=  \mathcal O\left(e^{-2\min (r,\frac13)\int_{t_{0}}^{t}\gamma(s)ds}\right)$ as $t \to +\infty$.
\end{theorem}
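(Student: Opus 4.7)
The plan is to derive Theorem \ref{theorem-basic} as an immediate specialization of Theorem \ref{main} to $p_0=0$, after a harmless clipping of the parameter $r$ so that it falls within the admissible range $(0,\tfrac{1}{3}]$.

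First I would set $\tilde r := \min(r,\tfrac{1}{3}) \in (0,\tfrac{1}{3}]$. The hypothesis $(\mathcal{H}_{r,\gamma})$ gives $\ddot\gamma(t) \geq 2r^2\gamma^3(t)$ for $t$ large, and since $\tilde r \leq r$ we immediately obtain $\ddot\gamma(t) \geq 2\tilde r^2\gamma^3(t)$. Specializing the condition (\ref{eq61}) of Theorem \ref{main} to $p_0=0$ and parameter $\tilde r$ reads exactly
\[
\ddot\gamma(t)\;\geq\;2\bigl[\min(0,-\tilde r)\bigr]^2\gamma^3(t)\;=\;2\tilde r^2\gamma^3(t),
\]
so the hypothesis of Theorem \ref{main} holds with $(p_0,\tilde r)$ for $t$ sufficiently large. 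The fact that the inequality is only required from some $t_1\geq t_0$ onward does not affect the asymptotic conclusion, since translating the origin of time multiplies $p(t)$ by a harmless positive constant that is absorbed in the $\mathcal O$.

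Next I would note that with $p_0=0$ we have $b(t)=c_0\,p(t)^{3p_0}=c_0$, so equation (\ref{equaexp}) reduces precisely to equation (\ref{equaexp_0}). A direct application of Theorem \ref{main} then yields
\[
f(x(t))-\min_{\mathcal H}f \;=\; \mathcal O\!\left(\frac{1}{p(t)^{2\tilde r+p_0}}\right)\;=\;\mathcal O\!\left(\frac{1}{p(t)^{2\tilde r}}\right).
\]
Finally, unwinding $p(t)=\exp\bigl(\int_{t_0}^{t}\gamma(s)\,ds\bigr)$ gives $1/p(t)^{2\tilde r}=\exp\bigl(-2\tilde r\int_{t_0}^{t}\gamma(s)\,ds\bigr)$, which, with $\tilde r=\min(r,\tfrac{1}{3})$, is exactly the announced rate.

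Since Theorem \ref{main} has already absorbed all the Lyapunov analysis, no further computation is required. The only point meriting care is the clipping $r\mapsto\min(r,\tfrac{1}{3})$: one must check that shrinking the constant preserves the one-sided inequality $\ddot\gamma\geq 2r^2\gamma^3$, which is trivial because $\gamma^3\geq 0$. This is where the exponent $2\min(r,\tfrac{1}{3})$ appears in the final estimate rather than $2r$, reflecting the structural restriction $r\leq\tfrac{1}{3}$ in Theorem \ref{main}.
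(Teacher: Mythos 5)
Your proposal is correct and follows essentially the same route as the paper, which obtains Theorem \ref{theorem-basic} precisely as the specialization of Theorem \ref{main} to $p_0=0$. You additionally make explicit two small points the paper leaves implicit --- the clipping $r\mapsto\min(r,\tfrac13)$ needed to land in the admissible range $r\in(0,\tfrac13]$, and the time-shift handling the ``for $t$ large enough'' hypothesis --- both of which are handled correctly.
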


\begin{remark}
The case $\gamma(t)=\frac{1}{t (\ln t)^{\rho}}$, for $0 \leq \rho \leq 1$,  was developed in  \cite{att2}. 
In that case
condition $(\mathcal{H}_{3,\gamma})$ writes as 
$$
2(\ln t )^2 + 3\rho \ln t + \rho (\rho +1) \geq 2 r^2  (\ln t)^{2(1-\rho)},
$$
which is satisfied for any $r\leq 1$ and any $t \geq e$.
\begin{itemize}
\item  If $\rho =1$, then 
$
p (t) = \exp \left(\displaystyle\int_{t_0}^{t}\frac{1}{s (\ln s)^{\rho}}ds \right)= \exp \left(\displaystyle\int_{\ln t_0 }^{\ln t}\frac{du}{x} \right)=\frac{\ln t}{\ln t_0 },
$
\\ and for $r=\frac{1}{3}$, we get  
$f (x(t))-\min_\mathcal{H}f = \mathcal O \left(\frac{1}{(\ln t )^{\frac{2}{3}}} \right).$
\item  If $0 \leq \rho < 1$, then 
$
p(t)= \exp \left(\displaystyle\int_{\ln t_0 }^{\ln t}\frac{1}{u ^{\rho}}du \right) = \exp \left( \frac{1}{1-\rho}
\left(  (\ln t )^{1- \rho} -  (\ln t_0 )^{1- \rho} \right)\right),
$ 
and, for $r=\frac{1}{3}$, we also get 
$f (x(t))-\min_\mathcal{H}f= \mathcal O \left(\frac{1}{\exp \left( \frac{2}{3(1-\rho)}
  (\ln t )^{1- \rho} \right)} \right).$
  \end{itemize}
  \end{remark}

 \subsection{The case  $b(t)=c_0t^{q}$ and $\gamma(t)=\frac{\alpha}{t}$.}
  When $b(t)=c_0t^{q}$ and $\gamma(t)=\frac{\alpha}{t}$  where $\alpha>0$ and $q\geq 0$, we first observe that 
  $p (t)=\exp\left(\int_{t_{0}}^{t}\gamma(s)ds\right)=\left(\frac{t}{t_0}\right)^\alpha .
  $
  The second-order continuous system  becomes:
    \begin{equation}\label{alpha}
    \ddot{x}(t)+\frac{\alpha}{t}\dot{x}(t)+c_0t^{q}\nabla f(x(t))=0.
\end{equation} 
Applying Theorem \ref{main}, we obtain  the following new result.
   \begin{theorem}\label{cor1.6}
   Let $x(\cdot)$  be a solution trajectory of {\rm (\ref{alpha})} with $\alpha>1$ and $q\geq 0$. 
  Suppose that $1<\alpha\leq3+q$. Then,
\begin{equation}\label{eq18}
f(x(t))-\min_\mathcal{H}f = \mathcal O\left(\frac{1}{t^{\frac{2\alpha+q}{3}}}\right),\hbox{ as } \, t \to +\infty.
\end{equation}
\end{theorem}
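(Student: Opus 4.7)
My plan is to recognize (\ref{alpha}) as a particular instance of the parametric family (\ref{equaexp}) and then invoke Theorem~\ref{main} with a carefully chosen pair $(r,p_0)$. With $\gamma(t)=\alpha/t$ we compute $p(t)=\exp(\int_{t_0}^{t}\gamma(s)ds)=(t/t_0)^{\alpha}$, so that $c_0 p(t)^{3p_0}=c_0 t_0^{-3\alpha p_0}\, t^{3\alpha p_0}$. The identification $q=3\alpha p_0$, i.e.\ $p_0=q/(3\alpha)$, therefore converts (\ref{alpha}) into (\ref{equaexp}) up to harmless rescaling of the constant $c_0$.

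Next, to extract the sharpest possible rate $1/p(t)^{2r+p_0}=(t/t_0)^{-\alpha(2r+p_0)}$ from Theorem~\ref{main}, I would take $r$ as large as allowed, namely $r=\tfrac{1}{3}$. With this choice the exponent becomes
$$
\alpha\!\left(\tfrac{2}{3}+\tfrac{q}{3\alpha}\right)=\tfrac{2\alpha+q}{3},
$$
which is exactly the rate stated in (\ref{eq18}).

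It remains only to verify hypothesis (\ref{eq61}) of Theorem~\ref{main} with $r=\tfrac{1}{3}$. A direct calculation yields $\ddot\gamma(t)=2\alpha/t^{3}$ and $\gamma^{3}(t)=\alpha^{3}/t^{3}$, so the inequality reduces to
$$
2\alpha \;\ge\; 2\bigl[\min(0,p_0-\tfrac{1}{3})\bigr]^{2}\alpha^{3}.
$$
When $p_0\ge\tfrac{1}{3}$ (equivalently $q\ge\alpha$) the right-hand side vanishes and the inequality is trivial. When $p_0<\tfrac{1}{3}$ (equivalently $q<\alpha$), it becomes $(\tfrac{1}{3}-p_0)^{2}\alpha^{2}\le 1$, that is $(\alpha-q)/3\le 1$, which is exactly the assumed bound $\alpha\le 3+q$. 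Thus (\ref{eq61}) holds throughout the range $1<\alpha\le 3+q$, and Theorem~\ref{main} delivers the announced estimate.

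The only substantive point (as opposed to routine bookkeeping) is checking that the optimal admissible choice $r=\tfrac{1}{3}$ is compatible with (\ref{eq61}) in the subcritical regime $q<\alpha$; this is precisely where the boundary condition $\alpha\le 3+q$ becomes tight, and hence is where some care is required.
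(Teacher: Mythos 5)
Your proof is correct and follows exactly the route the paper intends (the paper simply states that Theorem \ref{cor1.6} follows by ``applying Theorem \ref{main}'' without giving details): you identify $b(t)=c_0t^q$ with $c_0p(t)^{3p_0}$ via $p_0=q/(3\alpha)$, take the extremal admissible $r=\tfrac13$, and check that (\ref{eq61}) reduces to $\bigl(\tfrac{\alpha-q}{3}\bigr)^2\le 1$ in the regime $q<\alpha$, which is precisely $\alpha\le 3+q$. The exponent computation $\alpha(2r+p_0)=\tfrac{2\alpha+q}{3}$ is also correct, so nothing is missing.
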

\begin{remark}
Taking $q=0$, a direct application of the above result 
covers the results obtained in  \cite{redon,boyd} (case $\alpha \geq 3$), and in \cite{AAD,att1}, (case $\alpha \leq 3$). It suffices to take $\gamma (t)=\frac{\alpha}{t}$ and $r=\frac1{\alpha}$.
More precisely, we get  :
\begin{itemize}
\item if  $0<\alpha\leq3$ then
$f(x(t))-\min_\mathcal{H}f = \mathcal O(t^{\frac{-2\alpha}{3}})$, 

\smallskip

\item if $\alpha>3$ then $f(x(t))-\min_\mathcal{H}f = \mathcal O(\frac{1}{t^{2}})$.
\end{itemize}
\end{remark}
 \subsection{The case $b(t)=e^{\mu t^{q}}$ and $\gamma(t)=\frac{\alpha}{t^{1-q}}$.}
     Suppose that $\mu \geq 0\;, 0\leq q\leq1$  and $\alpha> 0 $. This will allow us to obtain the following exponential convergence rate of the values.
\begin{theorem}\label{thm6.4}
Let $x: [t_{0},+\infty[\longrightarrow \mathcal{H}$  be a solution trajectory of 
 \begin{equation}\label{alpha1}
  \ddot{x}(t)+\frac{\alpha}{t^{1-q}}\dot{x}(t)+e^{\mu t^{q}}\nabla f(x(t))=0.
  \end{equation}  
Suppose that $\alpha\leq \mu q$, then, as $t \to +\infty$
  $$f(x(t))-\min_\mathcal{H}f=\mathcal O\left(e^{-\frac{2\alpha+\mu q}{3}t^{q}}\right).$$
\end{theorem}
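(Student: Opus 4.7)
The plan is to recognize the dynamic \eqref{alpha1} as an instance of the model \eqref{equaexp} studied by Theorem \ref{main}. With $\gamma(t)=\alpha/t^{1-q}$ a direct computation gives $p_\gamma(t)=\exp\!\bigl(\tfrac{\alpha}{q}(t^{q}-t_{0}^{q})\bigr)$, and consequently the rescaling factor $b(t)=e^{\mu t^{q}}$ can be rewritten as $b(t)=c_{0}\,p_\gamma(t)^{3p_{0}}$ with $p_{0}=\mu q/(3\alpha)$ and $c_{0}=e^{\mu t_{0}^{q}}$. Thus \eqref{alpha1} has precisely the form \eqref{equaexp}, and Theorem \ref{main} is applicable provided the growth condition \eqref{eq61} is verified.

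To check \eqref{eq61}, I will exploit the hypothesis $\alpha\leq\mu q$, which translates exactly to $p_{0}\geq 1/3$. Taking the largest admissible parameter $r=1/3$, one has $p_{0}\geq r$, so $\min(0,p_{0}-r)=0$ and \eqref{eq61} reduces to $\ddot\gamma(t)\geq 0$. Since $\ddot\gamma(t)=\alpha(1-q)(2-q)\,t^{q-3}\geq 0$ for $q\in[0,1]$ and $\gamma$ is obviously nonincreasing, all hypotheses of Theorem \ref{main} hold. Applying it with $r=1/3$ and $p_0 = \mu q/(3\alpha)$ yields
\begin{equation*}
f(x(t))-\min_{\mathcal H}f \;=\; \mathcal O\!\left(p_\gamma(t)^{-(2/3+\mu q/(3\alpha))}\right).
\end{equation*}
Substituting the explicit expression of $p_\gamma$, the coefficient of $t^{q}$ in the exponent becomes $\bigl(\tfrac{2}{3}+\tfrac{\mu q}{3\alpha}\bigr)\tfrac{\alpha}{q}=\tfrac{2\alpha+\mu q}{3q}$, and since $q\leq 1$ one has $\tfrac{2\alpha+\mu q}{3q}\geq\tfrac{2\alpha+\mu q}{3}$, which gives the announced bound.

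The only real subtlety is that the threshold $\alpha\leq\mu q$ is sharp for this route: if $p_{0}<r$ held, condition \eqref{eq61} would demand a strictly positive lower bound on $\ddot\gamma(t)/\gamma(t)^{3}$, whereas this ratio behaves like $t^{-2q}$ and decays to $0$ as soon as $q>0$. Thus the assumption $\alpha\leq\mu q$ is exactly what is needed to place us in the easy regime $p_{0}\geq r$ and to trigger the exponential rate coming from \eqref{eq62}.
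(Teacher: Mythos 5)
Your proof is correct and follows exactly the route the paper intends (the paper leaves Theorem \ref{thm6.4} as a ``direct application of the previous theorems''): writing $b(t)=e^{\mu t^{q}}=c_0\,p_\gamma(t)^{3p_0}$ with $p_0=\mu q/(3\alpha)\geq \tfrac13$ and applying Theorem \ref{main} with $r=\tfrac13$. Your computation in fact yields the sharper exponent $\tfrac{2\alpha+\mu q}{3q}$, which implies the stated bound since $q\leq 1$.
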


\begin{remark} a)  For $q=\mu=0$, (\ref{alpha1}) reduces to the system initiated in  \cite{boyd}, \ie 
$$
  \ddot{x}(t)+\frac{\alpha}{t}\dot{x}(t)+\nabla f(x(t))=0.
  $$
Just assuming $\alpha>0$, we obtain  
    $\displaystyle\lim_{t\rightarrow +\infty}\left(f(x(t))-\min_\mathcal{H}f\right)=0$.
 
\noindent b) For $q=\frac{1}{2}$ we get
  \begin{itemize}
    \item If $\alpha\leq \mu$, then
    $f(x(t))-\min_\mathcal{H}f=\mathcal O\left(e^{-\frac{2(2\alpha+\mu )}{3})\sqrt{t}}\right).$
    \item If $\alpha\geq \mu$, then
    $f(x(t))-\min_\mathcal{H}f=\mathcal O\left(e^{-2\mu \sqrt{t}}\right).$
  \end{itemize}
\end{remark}
c) For $q=1$, direct application of  Theorem \ref{thm6.4} gives:
\begin{corollary}[\textbf{Linear convergence}]\label{convlin}
Let $x: [t_{0},+\infty[\rightarrow \mathcal{H}$  be a solution trajectory of  
\begin{equation}\label{alpha1c}
  \ddot{x}(t)+\alpha\dot{x}(t)+e^{\mu t}\nabla f(x(t))=0.
  \end{equation} 
If $\alpha\leq \mu ,$ then \;
  $f(x(t))-\min_\mathcal{H}f=\mathcal O\left(e^{-\frac{2\alpha +\mu }{3}t}\right).$
\end{corollary}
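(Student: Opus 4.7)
The plan is to obtain this as an immediate specialization of Theorem \ref{thm6.4} to the endpoint value $q=1$, so there is no real work to do beyond checking that the formulas and hypotheses specialize correctly. I would first note that with $q=1$, the viscous coefficient $\gamma(t)=\alpha/t^{1-q}$ collapses to the constant $\alpha$, and the temporal scaling becomes $b(t)=e^{\mu t}$, so equation \eqref{alpha1c} is precisely \eqref{alpha1} with $q=1$. The admissibility condition $\alpha\leq \mu q$ of Theorem \ref{thm6.4} reduces to $\alpha\leq\mu$, which is the hypothesis of the corollary, and the rate $\exp\!\left(-\tfrac{2\alpha+\mu q}{3}t^{q}\right)$ reduces to $\exp\!\left(-\tfrac{2\alpha+\mu}{3}t\right)$. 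These substitutions constitute the proof.

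If instead I wanted to derive the corollary directly from the master result Theorem \ref{main} (avoiding the intermediate step through Theorem \ref{thm6.4}), I would proceed as follows. With $\gamma\equiv\alpha$, compute
\[
p_{\gamma}(t)=\exp\!\Big(\int_{t_0}^{t}\alpha\,ds\Big)=e^{\alpha(t-t_0)},
\]
and recognize $b(t)=e^{\mu t}$ as (up to a harmless multiplicative constant) $p_{\gamma}(t)^{3p_0}$ with the choice $p_0=\mu/(3\alpha)$. The corollary places us in the regime $\alpha\leq\mu$, equivalently $p_0\geq 1/3$, so I would take $r=1/3$, which then satisfies $p_0\geq r$ and renders the key hypothesis \eqref{eq61} trivial, since $\ddot\gamma\equiv 0$ and $\min(0,p_0-r)=0$. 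Theorem \ref{main} then yields
\[
f(x(t))-\min_{\mathcal{H}}f=\mathcal{O}\!\left(\frac{1}{p_{\gamma}(t)^{2r+p_0}}\right)=\mathcal{O}\!\left(e^{-\alpha(2/3+\mu/(3\alpha))t}\right)=\mathcal{O}\!\left(e^{-\frac{2\alpha+\mu}{3}t}\right),
\]
which is the claim.

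There is no real obstacle here: the only thing to watch is the bookkeeping of constants in the exponent (the $t_0$-dependent factor absorbed into the big-$\mathcal{O}$) and the verification that setting $q=1$ in Theorem \ref{thm6.4} does not degrade any hypothesis (in particular that $q\in[0,1]$ allows the endpoint, which it does). In short, this corollary is a boundary case of the family of rates obtained in Theorem \ref{thm6.4}, and it is noteworthy mainly because it produces genuine linear convergence of the values for a general convex $f$, without any strong convexity assumption, at the price of an exponentially growing temporal scaling $b(t)=e^{\mu t}$ in front of $\nabla f$.
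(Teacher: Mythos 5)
Your proof is correct and takes exactly the paper's route: Corollary \ref{convlin} is stated in the paper as the direct specialization of Theorem \ref{thm6.4} to the endpoint $q=1$, with the hypotheses and the exponent reducing precisely as you describe. Your alternative derivation via Theorem \ref{main} (with $p_0=\mu/(3\alpha)\geq 1/3=r$, so that condition \eqref{eq61} holds trivially since $\ddot\gamma\equiv 0$) is also valid and consistent, but it is not needed beyond the first paragraph.
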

 Let us illustrate these results. Take $f(x_1,x_2):=\frac12\left(x_1^2+x_2^2\right)-\ln(x_1x_2)$, which is a strongly convex function. Trajectories of 
$$
\ddot{x}(t)+\alpha\dot{x}(t)+e^{\mu t}\nabla f(x(t))+ ce^{\nu t}\nabla^2 f(x(t))\dot{x}(t)=0,
$$
 corresponding to different values of the parameters $\alpha$, $\mu$, $\nu$, and $c$,  are plotted in Figure \ref{strong-convex}
 \footnote{From  Scilab version 6.1.0  
 http://www.scilab.org as an open source software}.
The parameter $c$ shows the importance of the Hessian-damping.
\begin{figure}[h]
\begin{center}
\includegraphics[width=12.5cm]{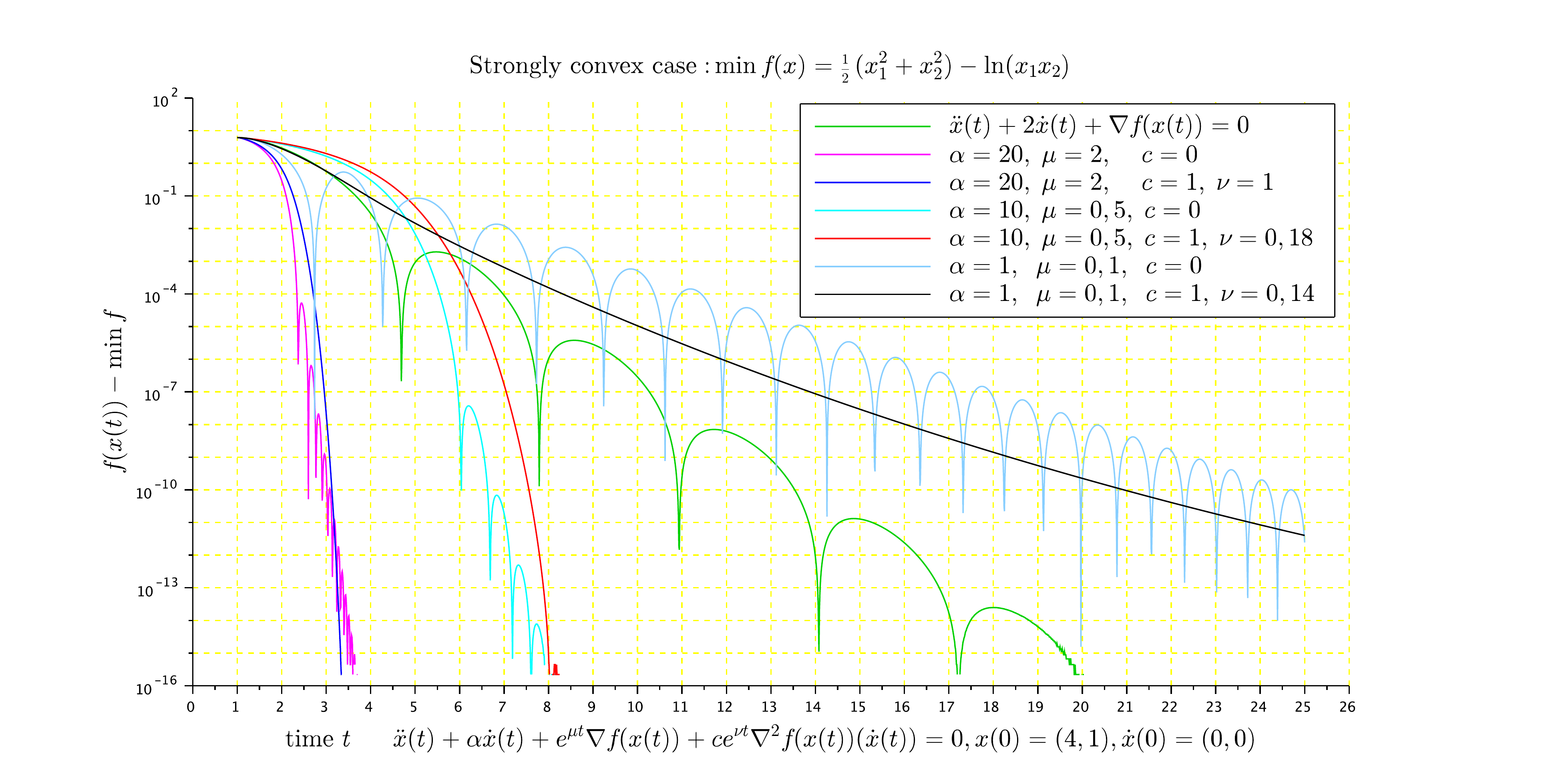}
\caption{Evolution of $f (x(t)) - \min f $ for  solutions of   (\ref{alpha1c}), (\ref{strong}), and $f(x_1,x_2)=\frac12\left(x_1^2+x_2^2\right)-\ln(x_1x_2)$.}
\label{strong-convex}
\end{center}
\end{figure}
\subsection{Numerical comparison}
Figure \ref{table1-compatison} summarizes our convergence results, according to the behavior of the parameters  $\gamma(t)$, $\beta(t)$, $b(t)$.
Let's comment on them and compare them, separately considering $f$ to be strongly convex or not.
\begin{figure}
\begin{center}
\begin{tabular}{|c|c|c||c||c|}
     \hline
$ \gamma(t) $ & $\beta(t)$ & $b (t)$ & $f(x(t))-\min f $& Reference \\     \hline \hline
Cte	& 0 & 1	& $\mathcal O\left( t^{-1}\right)	$	& (1964) \cite{polyak}\\ \hline
Cte	& Cte & 1	& $\mathcal O\left( t^{-1}\right)	$	& (2002) \cite{AABR}\\ \hline
$\alpha/t$ & 0 & 1 &  \begin{tabular}{l} $\mathcal O\left(t^{-\frac23\alpha}\right)$  if  $0<\alpha\leq 3$\\      
$\mathcal O\left(t^{-2}\right)\;\;\;\;$  if  $\alpha\geq 3$  \end{tabular} & \begin{tabular}{l} (2019) \cite{att1}\\  (2014)  \cite{boyd} \end{tabular}  \\ \hline
$\alpha/t$ & Cte &  1	& $\mathcal O\left( t^{-2}\right)$	 if  $\alpha\geq 3,\beta>0$ & (2016)  \cite{APR}\\ \hline
$\gamma(t)$ & 0 & $b(t)$	&  \begin{tabular}{l} $\mathcal O\left(\left(p(t)\int_t^{+\infty}(p(s))^{-1}ds\right)^{-2} (b(t))^{-1}\right)$ \\     
where  $p(t) \eqdef \exp\left(\int_{t_0}^t\gamma(s)ds\right)$ \end{tabular} & (2019) \cite{att1}\\ \hline
$\alpha/t$ & $\beta(t)$ &  $b(t)$	& $\mathcal O\left(\left(t^2 b(t)-\dot{\beta}(t) -\dfrac{\beta(t)}{t}\right)^{-1}\right)$
& (2020)  \cite{ACFR}\\ \hline
  \end{tabular}
 \end{center}
    \caption{Convergence rate  of $f(x(t))-\min f$ for instances of  Theorem \ref{th1aa} and general  $f$.}\label{table1-compatison}
\end{figure}
\subsubsection{Strongly convex  case} 
Suppose that $f$ is $s$-strongly convex. Following Polyak's \cite{polyak}, the system 
 \begin{equation}\label{strong}
    \ddot{x}(t)+2\sqrt{s}\dot{x}(t)+\nabla f({x}(t))=0
\end{equation}
provides the  linear  convergence rate    $ f (x(t)) - \inf_{\mathcal{H}} f   \leq    Ce^{-\sqrt{s}t}$,
see also \cite[Theorem~2.2]{Siegel}. In the presence of an additional Hessian-driven damping term 
\begin{equation}\label{dyn-sc}
\ddot{x}(t) + 2\sqrt{s} \dot{x}(t) + \beta \nabla^2 f (x(t))\dot{x}(t) + \nabla f (x(t)) = 0 \;\;\;(\beta\geq 0)
\end{equation}
  a related  linear rate of convergence  can be found in \cite[Theorem~7]{ACFR}. 
Let us insist on the fact that, in Corollary \ref{convlin}, we obtain a linear convergence rate for a general convex differentiable function $f$.  
In Figure \ref{strong-convex}, for  the strongly convex function 
$
f(x_1,x_2)=\frac12\left(x_1^2+x_2^2\right)-\ln(x_1x_2),
$
 we can observe that some values of  $ \mu $ give a better speed of convergence of $f (x (t)) - \min f $.
We can also note that for $ \mu $ correctly set,  the system (\ref{alpha1c}) provides a better linear convergence rate  than  the system (\ref{strong}).  
\subsubsection{Non-strongly convex  case} 
We illustrate our results on the following simple example of a non strongly convex  minimization problem, with non unique solutions.
\begin{equation}\label{examp01}
\min_{\R^2}f (x_1,x_2)=\frac12(x_1+10^3x_2)^2.
 \end{equation}
 From Figure \ref{non-strong-convex} we get the following properties:\\ 
a) The  convergence rate of the values is in accordance with Figure \ref{table1-compatison}.\\
b) The system (\ref{alpha1c}) is  best  for its linear convergence of values.\\
c) The  Hessian-driven damping reduces the oscillations of the  trajectories.
\begin{figure}[h]
\begin{center}
\includegraphics[width=12cm]{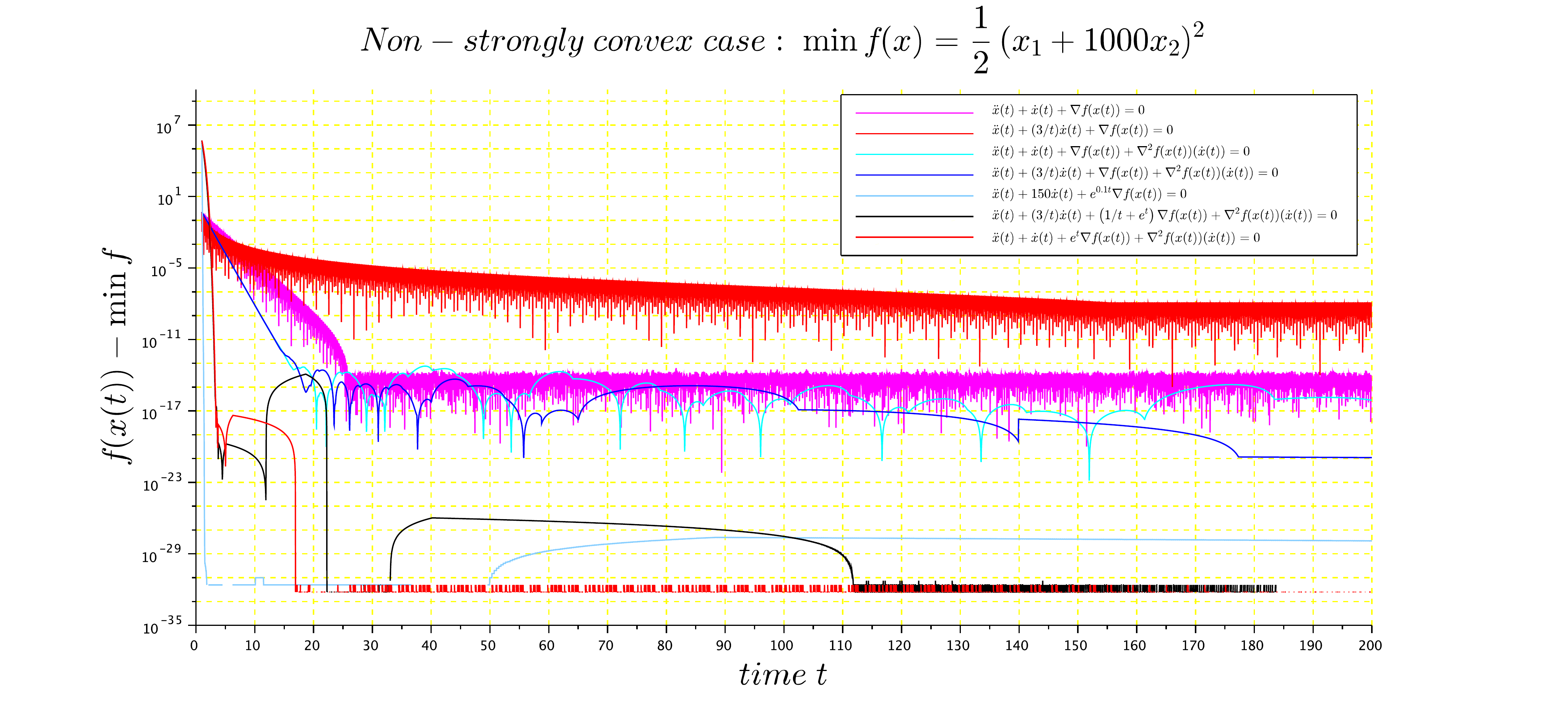}
\caption{ Evolution   of  $f (x(t)) - \min f $ for  systems in Figure \ref{table1-compatison}, and  $f(x_1,x_2)=\frac12\left(x_1^2+10^3x_2^2\right)$.  
}
\label{non-strong-convex}
\end{center}
\end{figure}
\section{Conclusion, perspectives}
Our study is one of the first works to simultaneously consider the combination of three basic techniques for the design of fast converging inertial dynamics in convex optimization: general viscous damping (and especially asymptotic vanishing damping in relation to the Nesterov accelerated gradient method), 
Hessian-driven damping which has a spectacular effect on the reduction of the oscillatory aspects (especially for ill-conditionned minimization problems), and temporal rescaling. We have introduced a system of equations-inequations whose solutions provide  the coefficients of a general Lyapunov functions for these dynamics. We have been able to encompass most of the existing results and find new solutions for this system, thus providing new Lyapunov functions. Also, we have been able to explain the  mysterious coefficients which have been used in recent algorithmic developements, and which were just justified until now by the simplification of complicated calculations. Finally, by playing on fast rescaling methods, we have obtained linear convergence results for general convex functions.
This work provides a basis for the development of corresponding algorithmic results.


\begin{thebibliography}{99}
 %
 \bibitem{Alvarez}  F. \'Alvarez,   On the minimizing property of a second-order dissipative system in Hilbert spaces,
SIAM J. Control Optim., 38 (4) (2000),  1102-1119.

\bibitem{AABR} F. \'Alvarez, H.  Attouch,  J.  Bolte, P. Redont,    A second-order gradient-like dissipative dynamical system with Hessian-driven damping,   J. Math. Pures Appl.,  81  (2002),    747--779.

\bibitem{AAD}  V. Apidopoulos, J.-F. Aujol, Ch.  Dossal,  Convergence rate of inertial Forward-Backward algorithm beyond Nesterov's rule, Math. Program., 180 (2020) , 137--156. 


\bibitem{cabot1} H. Attouch, A. Cabot,  Asymptotic stabilization of inertial gradient dynamics with time-dependent viscosity,  J. Differential Equations, 263  (2017),   5412--5458. 

\bibitem{att8}  H. Attouch, A., Cabot,  Convergence rates of inertial forward-backward algorithms,
SIAM J. Optim., 28 (2018), 849--874.


\bibitem{att9} H. Attouch, A. Cabot, Z.  Chbani, H. Riahi,  Accelerated forward-backward algorithms
with perturbations, J. Optim. Theory Appl., 179 (2018), 1--36. 

\bibitem{att2} H. Attouch, A. Cabot, Z.  Chbani, H. Riahi,  Rate of convergence of inertial gradient dynamics with time-dependent viscous damping coefficient,  Evol. Equ. Control Theory, 7 (2018),  353--371.  

\bibitem{ACFR} H. Attouch, Z. Chbani, J.  Fadili, H.   Riahi, First-order optimization algorithms via inertial systems with Hessian driven damping, (2019)  HAL-02193846.

\bibitem{redon} H. Attouch, Z. Chbani, J.  Peypouquet, P. Redont,   Fast convergence of inertial dynamics and algorithms with asymptotic vanishing damping,  Math. Program., (2019) DOI: 10.1007/s10107-016-0992-8.

\bibitem{att1} H. Attouch, Z. Chbani, H. Riahi,  Rate of convergence of the Nesterov accelerated gradient method in the subcritical case
$\alpha \leq3$, ESAIM Control Optim. Calc. Var., 25(2)  (2019),   https://doi.org/10.1051/cocv/2017083


\bibitem{att6} H. Attouch, Z.  Chbani, H. Riahi,   Fast proximal methods via time scaling of damped inertial dynamics, SIAM J. Optim., 29 (2019), 2227--2256.


\bibitem{ACR-Pafa-2019}  H. Attouch, Z.  Chbani, H.  Riahi, 
Fast convex optimization via time scaling of damped inertial gradient dynamics, 
Pure and Applied Functional Analysis, (2019),\\ DOI: 10.1080/02331934.2020.1764953.

\bibitem{att11}  H. Attouch, Z.  Chbani, H. Riahi,  Convergence rates of inertial proximal algorithms with general extrapolation and proximal
coefficients, Vietnam J. Math. 48 (2020), 247--276, https://doi.org/10.1007/s10013-020-00399-y



\bibitem{AGR} H. Attouch, X.  Goudou, P. Redont,  The heavy ball with friction method. The continuous dynamical system.
 Commun. Contemp. Math., 2(1)  (2000),   1--34.
  
  \bibitem{AL}  H. Attouch,   S.C. L\'aszl\'o,
Newton-like inertial dynamics and proximal algorithms governed by maximally monotone operators, (2020), https://hal.archives-ouvertes.fr/hal-02549730. 


\bibitem{att7} H. Attouch, J.   Peypouquet,  The rate of convergence of Nesterov's accelerated forward-backward method is actually faster than $1/k^2$, SIAM J. Optim., 26  (2016), 1824--1834.

\bibitem{APR}  H.  Attouch, J.  Peypouquet, P.  Redont,   Fast convex minimization via inertial dynamics with Hessian driven damping, J. Differential Equations, 261 (2016),  5734--5783. 

\bibitem{BeckTeboulle} A.  Beck, M. Teboulle,  A fast iterative shrinkage-thresholding algorithm for linear inverse problems,
SIAM J. Imaging Sciences, 2(1)  (2009),   183--202.

\bibitem{Bot1}  R. I. Bo\c t, E. R. Csetnek,  Second order forward-backward dynamical systems for monotone inclusion problems, SIAM J. Control  Optim., 54 (2016),  1423-1443.

\bibitem{Bot2}
 R. I. Bo\c t, E. R. Csetnek, S.C. L\'{a}szl\'{o},   Approaching nonsmooth nonconvex minimization through second order proximal-gradient dynamical systems, J. Evol. Equ., 18(3) (2018), 1291--1318. 

\bibitem{Bot3}
 R. I. Bo\c t, E. R. Csetnek, S.C. L\'{a}szl\'{o},     Tikhonov regularization of a second order dynamical system with Hessian damping, Math. Program., DOI:10.1007/s10107-020-01528-8.


\bibitem{Guler1} O. G\"uler, On the convergence of the proximal point algorithm for convex optimization, SIAM J. Control Optim., 29 (1991),  403--419.

\bibitem{Guler2} O.  G\"uler, New proximal point algorithms for convex minimization, SIAM Journal on Optimization, 2 (4) (1992),  649--664.

\bibitem{haraux}  A. Haraux,   Syst\`emes Dynamiques Dissipatifs et Applications, Recherches en Math\'ematiques
Appliqu\'ees 17, Masson, Paris, 1991.

\bibitem{may}
R. May,  Asymptotic for a second order evolution equation with convex potential and vanishing damping term, Turkish Journal of Mathematics, 41 (3) (2016), 681--685.

\bibitem{nestro1} Y. Nesterov,  A method of solving a convex programming problem with convergence rate $\mathcal O(1/k2)$, Soviet Mathematics Doklady,
27 (1983), 372--376.  

\bibitem{nestro2} Y. Nesterov,   Introductory Lectures on Convex Optimization: A Basic Course. Springer Science+Business Media New York (2004).


\bibitem{PS} J. Peypouquet, S. Sorin,   Evolution equations for maximal monotone operators: asymptotic analysis in continuous and discrete time, J. Convex Anal, 17 (3-4)  (2010), 1113--1163.

 \bibitem{polyak}  B.T.  Polyak,   Some methods of speeding up the convergence of iteration methods, U.S.S.R. Comput. Math. Math. Phys., 4 (1964), 1--17. 
 
 \bibitem{SDJS} 
B. Shi, S.S   Du,  M.I. Jordan, W.J.   Su,
 Understanding the acceleration phenomenon via high-resolution differential equations, arXiv:submit/2440124[cs.LG] 21 Oct 2018.

\bibitem{Siegel} W. Siegel,  Accelerated first-order methods: Differential equations and Lyapunov functions. arXiv:1903.05671v1  [math.OC] (2019).

 \bibitem{boyd} W.J. Su, S. Boyd, E.J  Cand\`es,  A differential equation for modeling Nesterov's accelerated gradient method: theory and insights,Neural Information Processing Systems, 27
  (2014), 2510--2518.

\end{thebibliography}
\end{document}